\newtheorem{lem}{Lemma}[section]
\newtheorem{thrm}[lem]{Theorem}
\newtheorem{conj}[lem]{Conjecture}
\newtheorem{prop}[lem]{Proposition}
\newtheorem{cor}[lem]{Corollary}
\theoremstyle{definition}
\theoremstyle{remark}
\newtheorem*{rem}{Remark}
\renewcommand{\Re}{\ensuremath{\operatorname{Re}}}
\renewcommand{\epsilon}{\varepsilon}
\newcommand{\mr}{\mathrm}
\newcommand{\mf}{\mathfrak}
\newcommand{\iu}{{i\mkern1mu}}
\newcommand{\R}{\ensuremath{\mathbb{R}}}
\newcommand{\C}{\ensuremath{\mathbb{C}}}
\newcommand{\Schwartz}{\ensuremath{\mathscr{S}}}
\DeclareMathOperator{\sgn}{sgn}
\newcommand{\<}{\ensuremath{\langle}}
\renewcommand{\>}{\ensuremath{\rangle}}
\newcommand{\p}{\ensuremath{\partial}}
\newcommand{\bbo}{\ensuremath{\mathbbm 1}}
\newcommand{\bpf}{\begin{proof}}
\newcommand{\epf}{\end{proof}}
\DeclareMathOperator{\tr}{tr}
\newcommand{\I}{\mf I}
\newcommand{\op}{\mr{op}}
\newcommand{\DNLS}{\mr{DNLS}}
\newcommand{\Bd}{B_\delta}
\newcommand{\BdS}{\Bd\cap \Schwartz}
\newcommand{\vk}{\varkappa}
\newcommand{\pd}{\,{\rm{d}}}
\newcommand{\dx}{\,{\rm{d}}x}
\newcommand{\dy}{\,{\rm{d}}y}
\newcommand{\norm}[1]{{\left\vert\kern-0.25ex\left\vert\kern-0.25ex\left\vert #1 \right\vert\kern-0.25ex\right\vert\kern-0.25ex\right\vert}}
\newcommand{\qtq}[1]{\quad\text{#1}\quad}
\newcommand{\vr}{\gamma}
\newcommand{\sbrack}[1]{^{[#1]}}
\title[microscopic conservation laws for DNLS]{microscopic  conservation laws for the derivative Nonlinear Schr\"{o}dinger equation}
\numberwithin{equation}{section}
\begin{document}
	\onehalfspacing

	\author[]{Xingdong Tang}
	\address{\hskip-1.15em Xingdong Tang 
		\hfill\newline School of Mathematics and Statistics, \hfill\newline Nanjing Univeristy of Information Science and Technology, 
		\hfill\newline Nanjing, 210044,  People's Republic of China.}
	\email{txd@nuist.edu.cn}
	
	\author[]{Guixiang Xu}
	\address{\hskip-1.15em Guixiang Xu
		\hfill\newline Laboratory of Mathematics and Complex Systems,
		\hfill\newline Ministry of Education,
		 \hfill\newline School of Mathematical Sciences,
		\hfill\newline Beijing Normal University,
		\hfill\newline Beijing, 100875, People's Republic of China.}
	\email{guixiang@bnu.edu.cn}

	\subjclass[2010]{Primary: 35L70, Secondary: 35Q55.}
	
	\keywords{Microscopic conservation law; Derivative nonlinear Schr\"{o}dinger equation; Diagonal Green's function; Perturbation determinant.}

\begin{abstract}
	Compared with macroscopic conservation law for the solution of the derivative nonlinear Schr\"odingger equation (DNLS) with small mass in \cite{KlausS:DNLS}, we show the corresponding microscopic conservation laws for the Schwartz solutions of  DNLS with small mass. The new ingredient is to make use of the logarithmic perturbation determinant  introduced  in \cite{Rybkin:KdV:Cons Law, Simon:Trace} to show one-parameter family of microscopic conservation laws of the $A(\kappa)$ flow and the DNLS flow, which is motivated by \cite{HKV:NLS,KV:KdV:AnnMath,KVZ:KdV:GAFA}.  
\end{abstract}

\maketitle

%

\section{Introduction}
We consider the derivative nonlinear Schr\"odinger equation (DNLS)
\begin{equation}\label{DNLS}
\iu \partial_t q + \partial^2_x q + \iu  \partial_x(|q|^2q)  =0,
\end{equation}
where  \(q\colon \R\times \R\rightarrow \C\). \eqref{DNLS}  is $L^2$-critical since  the dilation
\begin{align}\label{scaling}
q(t,x)\mapsto q_{\lambda}(t,x)=\lambda^{1/2}q(\lambda^2t, \lambda x)
\end{align}
leaves both \eqref{DNLS} and the  $L^2$ norm invariant.   The derivative nonlinear Schr\"odinger equation appears in plasma physics \cite{MOMT-PHY, M-PHY,
	SuSu-book}, and references therein.

Local well-posedness result for \eqref{DNLS} in the energy space was
worked out by N. Hayashi and T. Ozawa \cite{HaOz-94, Oz-96}. They
combined the fixed point argument with the $L^4_tW^{1, \infty}_x$
estimate to construct local-in-time solution with arbitrary data in
energy space. For other results, we can refer to \cite{Ha-93,
	HaOz-92}. Since \eqref{DNLS} is energy subcritical case, the maximal time interval of existence only depends  on $H^1$ norm of
initial data. Later, local well-posedness result for \eqref{DNLS}  in $H^s, s\geq 1/2$ is due to
H. Takaoka \cite{Ta-99} by Bourgain's Fourier restriction method.
The sharpness is shown in \cite{Ta-01} in the sense that nonlinear
evolution $u(0)\mapsto u(t)$ fails to be $C^3$ or even uniformly
$C^0$ in this topology, even when $t$ is arbitrarily close to zero
and $H^s$ norm of the data is small (see also Biagioni-Linares
\cite{BiLi-01-Illposed-DNLS-BO}).

Global well-posedness is shown for \eqref{DNLS}
in the energy space in \cite{Oz-96},  under the smallness condition
\begin{align}\label{Cond:smalldata}
\|u_0\|^2_{L^2} < 2 \pi,
\end{align}
the argument is based on the sharp Gagliardo-Nirenberg inequality
and the energy method (conservation of mass and energy). This result is
improved by H. Takaoka \cite{Ta-01} by Bourgain's
restriction method, who proved global
well-posedness in $H^s$ for $s>32/33$ under the condition
\eqref{Cond:smalldata}.  In \cite{CKSTT-01, CKSTT-02}, I-team make use of almost conservation law \cite{Tao:book:Nonlinear Dispersive Equations} to
show global well-posedness in $H^s, s>1/2$ under
\eqref{Cond:smalldata}. Miao, Wu and Xu \cite{MiaoWX-2011} combine almost conservation law and the refined resonant
decomposition technique to  obtain the
 global well-posedness in $H^{1/2}$ under
\eqref{Cond:smalldata}.  Later, Wu use the generalized Gagliardo-Nirenberg inequality to improve the global well-posedness  of \eqref{DNLS} in the energy space under the condition
\begin{align}\label{Cond:small:4pi}
\|u_0\|^2_{L^2} < 4 \pi
\end{align}
in \cite{Wu-DNLS}, where $4\pi$ is the mass of the solitary waves with critical parameters of \eqref{DNLS}.  Miao, Tang and Xu use the structure analysis and classical variational argument to show the existence of solitary waves with two parameters and  improve the global result of \eqref{DNLS} in the energy space in \cite{MTX:DNLS:Exist}, and further use perturbation argument, modulation analysis and Lyapunov stability to show the orbital stability of weak interaction multi-soliton solution with subcritical parameters  in the energy space in \cite{MTX:DNLS:stab}.
 We can also refer  to \cite{ColOhta:DNLS:stab, GNW:gDNLS:instab, LeW:gDNLS:stab, MTX:gDNLS:instab, TX:gDNLS:stab} for  the stability analysis of the solitary waves of the (generalized)  derivative nonlinear Schr\"odinger equation in the energy space and to \cite{GW:DNLS:GWP} for lower regularity result of \eqref{DNLS} by almost conservation law in \cite{Tao:book:Nonlinear Dispersive Equations}. 
 
Since \eqref{DNLS} is an integrable system in \cite{AbCl:book, KaupN:DNLS},  there are lots of global well-posedness of \eqref{DNLS} with mass restriction in the  weighted Sobolev spapce based on the inverse scattering method, please refer to \cite{JLPS:DNLS:APDE, JLPS:DNLS:QJPAM, JLPS:DNLS:CPDE, PelinSS:DNLS:DPDE, PelinSS:DNLS:IMRN}and reference therein.

The conjecture about \eqref{DNLS} is the following.

\begin{conj}
	Let \(s>0\). \eqref{DNLS}  is globally well-posed for all initial data in \(H^s(\R)\)  in the sense that the solution map \(\Phi\) extends uniquely from Schwartz space to a jointly continuous map \(\Phi\colon \R\times H^s(\R)\rightarrow H^s(\R)\).
\end{conj}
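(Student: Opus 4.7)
The plan is to follow the template developed by Killip--Vi\c{s}an and collaborators in \cite{HKV:NLS, KV:KdV:AnnMath, KVZ:KdV:GAFA} for completely integrable dispersive equations. The strategy reduces the global well-posedness claim to two ingredients: (i) an \emph{a priori} bound for the $H^s$ norm of Schwartz solutions that is uniform on bounded time intervals and depends only on the $H^s$ norm of the initial data, and (ii) equicontinuity in $H^s$ of the orbit of any bounded subset of $\Schwartz$ under the flow. Granted (i) and (ii), together with local well-posedness of \eqref{DNLS} on Schwartz data under a smallness condition such as \eqref{Cond:smalldata} or \eqref{Cond:small:4pi}, one extends the solution map uniquely by density from $\Schwartz$ to $H^s$, yielding the asserted jointly continuous flow.

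First I would exploit the one-parameter family of microscopic conservation laws for the $A(\kappa)$ flow and for the DNLS flow that the present paper constructs. Integrating each density identity in $x$ yields a family of conserved quantities $H_\kappa(q)$; by expanding $\log\det$ of the perturbation determinant \cite{Rybkin:KdV:Cons Law,Simon:Trace} as an asymptotic series in $\kappa^{-1}$, one extracts at each order a quantity comparable, modulo lower-order error terms bounded by the preceding conserved quantities, to a fractional Sobolev norm of $q$. Uniformity in $\kappa$ ranging over a suitable strip of the upper half-plane then promotes these order-by-order controls into a genuine equivalence with $\|q\|_{H^s}^2$ for every $s>0$, providing (i). The microscopic form of the law---not merely the macroscopic one already used in \cite{KlausS:DNLS}---is what allows localized densities to be compared pointwise in time, which is indispensable for the later steps.

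Next, to obtain equicontinuity (ii), I would use the $A(\kappa)$ flow as a regularization of DNLS. Since $A(\kappa)$ and DNLS arise from a common Lax structure, their Hamiltonian vector fields commute, and on Schwartz data the $A(\kappa)$ flow converges to the DNLS flow as $\kappa\to\infty$. Writing the difference of the two evolutions via a Duhamel identity and controlling the resulting commutator term through the microscopic densities transfers equicontinuity of the initial data into equicontinuity of the evolved data, uniformly in time on compact intervals. Combined with (i), this upgrades Cauchy sequences of smoothed data to Cauchy sequences in $C(I;H^s)$, so the solution map extends uniquely and continuously.

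The main obstacle is twofold. The first and more technical part is showing that the coercivity extracted from $\log\det$ holds for every $s>0$, not merely for the integer $s$ or narrow ranges that appear at first sight; this requires careful multilinear estimates that respect the derivative structure of the nonlinearity in \eqref{DNLS} and the absence of an obvious gauge-invariant polynomial expansion of the conservation laws. The second obstacle, more serious in the present work, is the mass threshold: because the microscopic conservation laws here are established under a smallness hypothesis, proving the conjecture unconditionally (in particular without \eqref{Cond:small:4pi}) would require pushing the perturbation determinant past the perturbative regime, which remains open in this circle of ideas.
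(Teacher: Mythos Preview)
The statement you are attempting to prove is a \emph{conjecture} in the paper, not a theorem; the paper does not contain a proof of it, and indeed presents it as open. The paper's actual contribution is the microscopic conservation law of Theorem~1.2 (and the supporting machinery of Sections~\ref{S:3}--\ref{S:4}), which the authors view as an ingredient toward the conjecture, not a resolution of it. So there is no ``paper's own proof'' to compare against.

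Your proposal is a reasonable sketch of the Killip--Vi\c{s}an template, and you correctly identify where the present paper's results would enter (step~(i), via the coercivity of $\rho^{[2]}(\kappa)$ and the bound \eqref{APBound}). But what you have written is a strategy with acknowledged gaps, not a proof. Concretely: the paper establishes the microscopic laws only under a small-mass hypothesis $q\in B_\delta$, and its coercivity estimate \eqref{APBound} is stated for $s\in(0,\tfrac12)$; you yourself flag both the extension to all $s>0$ and the removal of the mass threshold as unresolved obstacles. Moreover, the equicontinuity/commuting-flow step~(ii) you describe is not carried out anywhere in this paper for DNLS---the paper stops at exhibiting the conservation laws and does not run the approximation-by-$A(\kappa)$-flow argument. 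Until those pieces are supplied, the conjecture remains open, which is exactly why the paper labels it as such.
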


According to the above well known result, we need loosen the continuous dependence of the solution on initial data to consider the solution of \eqref{DNLS} in $H^s(\R)$ with $s\in (0,1/2)$.  The basic question is that how to control  the uniform estimates of the solution of \eqref{DNLS}. 

Motivated by Killip-Visan-Zhang's argument in \cite{KVZ:KdV:GAFA}, Klaus and Schippa combine the integrability of \eqref{DNLS} with the series expansion of the perturbation determinant \cite{Rybkin:KdV:Cons Law, Simon:Trace} to obtain  macroscopic conservation law for the Schwartz solution of \eqref{DNLS} with small mass in \cite{KlausS:DNLS}, In this paper, we will   show the corresponding microscopic form and obtain one-parameter family of microscopic conservation laws for the $A(\kappa)$  flows (see \eqref{A-def}) and the  DNLS flow by Harrop-Griffiths-Killip-Visan's argument in \cite{HKV:NLS, KV:KdV:AnnMath}. Compared with macroscopic form, microscopic conservation law with coercivity helps to show the local smoothing effect for \eqref{DNLS} in $H^s(\R)$ and can be further applied into global wellposedness analysis.  We can refer to  \cite{HKV:NLS,KV:KdV:AnnMath, Tal:BO} and reference therein.  
 
 We now  recall some Hamiltonian mechanics background. \eqref{DNLS} is Hamiltonian equation with respect to the following Poisson structure: 
\begin{equation}\label{PoissonBracket}
	\{F,G\} :
	=  \int  \tfrac{\delta F}{\delta r}\partial_x \bigl(\tfrac{\delta G}{\delta q}\bigr) + \tfrac{\delta F}{\delta q}\partial_x \bigl(\tfrac{\delta G}{\delta r}\bigr) \dx,
\end{equation} 
where the operators $\tfrac{\delta }{\delta q}$ and $\tfrac{\delta }{\delta r}$ denote the functional Fr\'echet derivatives. Any Hamiltonian $H(q, r, t)$ generates a flow via the equation
\begin{align}\label{HFlow}
\partial_t  \begin{bmatrix} q \\ r \end{bmatrix}
=   \begin{bmatrix} 0 & 1  \\ 1 & 0 \end{bmatrix} \partial_x 
\begin{bmatrix} \frac{\delta H}{\delta q} \\ \frac{\delta H}{\delta r} \end{bmatrix}. 
\end{align}
Correspondingly, \eqref{DNLS} is the Hamiltonian flow associated to
\begin{equation}\label{HDNLS}
H_{\DNLS} := \int _{\R}-\iu qr' + \frac12 q^2r^2\dx,
\end{equation}
where  $r=-\bar{q}$. Two other important Hamiltonian quantities for \eqref{DNLS} are
\begin{equation}\label{Mass Energy}
M:=   \int qr\dx, \quad E_{\DNLS} :=\int q' r' - \frac32 \iu q^2 rr' + \frac12 q^3 r^3 \dx.
\end{equation}
Conservations of $M$, $H_{\DNLS}$  and $E_{\DNLS}$ is due to  gauge, space translation and time translation invariance of \eqref{DNLS}, the commutativity of $H_{\DNLS}$ and $E_{\DNLS}$  is based on the fact that they are completely integrable, which means the existence of an infinite family of commuting flows for \eqref{DNLS}.  We can refer to \cite{AbCl:book} \cite{KlausS:DNLS}  for more details.  Based on the recent breakthrough in KdV, mKdV and NLS by B. Harrop-Griffiths, R. Killip and M. Visan  in \cite{HKV:NLS, KV:KdV:AnnMath}, the commuting flow approximation  will be the robust  method for showing global well-posedness theory of \eqref{DNLS} in the lower regularity space.  We can also refer to \cite{KTataru:NLS:DMJ, NRTataru:DS:InventM}  and reference thereein.

Let us write the Lax operator related to \eqref{DNLS} and its unperturbed one
\begin{equation}\label{Intro KN L}
L(\vk) := \begin{bmatrix}\p+\iu\vk^2 & -\vk q\\ -\vk r&\p-\iu\vk^2\end{bmatrix} \text{~~and~~} L_0(\vk) := \begin{bmatrix}\p+\iu\vk^2  & 0\\0& \p-\iu\vk^2 \end{bmatrix}.
\end{equation}
By simple calculations, we know that 
\begin{equation*}
R_0(\kappa) := L_0(\kappa)^{-1} = \begin{bmatrix}(\p+\iu\kappa^2)^{-1} & 0\\0&(\p-\iu\kappa^2)^{-1}\end{bmatrix}
\end{equation*}
admits the integral kernel
\begin{equation*}\label{G_0}
G_0(x,y;\kappa) = e^{-\iu\kappa^2|x - y|}\begin{bmatrix}\bbo_{\{y<x\}}&0\\0&-\bbo_{\{x<y\}}\end{bmatrix} \quad\text{for $\iu\kappa^2 >0$}.
\end{equation*}
For $\iu\kappa^2<0$, we may use ${G}_0(x,y;\kappa)=-G_0(y,x;-\bar{\kappa})$. The resolvent operator $R(\kappa):=L(\kappa)^{-1}$ for Schwartz function $q$ with small mass also has the  integral kernel $G(x,y;\kappa)$ (See Proposition \ref{P:R})
\begin{equation*} 
\begin{bmatrix}
G_{11}(x,y,\kappa) & G_{12}(x,y,\kappa) \\
G_{21}(x,y,\kappa) & G_{22}(x,y,\kappa)
\end{bmatrix}.
\end{equation*}
Let us define three key functionals as follows.
\begin{align*}
\vr(x;\kappa) &:=\sgn(\iu\kappa^2) \bigl[ G_{11}(x,x;\kappa) - G_{22}(x,x;\kappa) \bigr]  - 1, \\
g_{12}(x;\kappa) &:=  \sgn(\iu\kappa^2) G_{12}(x,x;\kappa), \\
g_{21}(x;\kappa) &:=  \sgn(\iu\kappa^2) G_{21}(x,x;\kappa), 
\end{align*}
 
 With these preparations,  the main result in this paper is
\begin{thrm}
	Let $s\in (0,1/2)$, $\iu\vk^2\in\R\setminus(-1,1)$,  $q(0)\in H^s (\R)\cap \Schwartz$ with small mass. Suppose that $q$ is a  solution to \eqref{DNLS}, then we have
	 $$\partial_t \rho(\vk) +  \partial_x j_{\DNLS}(\vk) =0, $$ where
the density $	\rho$  and the flux $	j_{\DNLS}$ are defined as follows	\begin{align*}
	\rho(\vk): =\, & -\vk\frac{qg_{21}(\vk)+rg_{12}(\vk)}{2+\vr(\vk)},\\
	j_{\DNLS}(\vk):
	= \, &
	i \vk \frac{q'g_{21}(\vk) - r' g_{12}(\vk)}{2+\vr(\vk)}
	- i\vk^2 q r
	- 2\vk^2 \rho(\vk)
	- qr\rho(\vk). 
	\end{align*}
	\end{thrm}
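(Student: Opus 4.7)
The plan is to reduce the microscopic conservation law to a finite algebraic verification by combining (a) the first-order spatial Riccati-type identities satisfied by the diagonal resolvent kernel with (b) a Lax representation for the DNLS time evolution. First, from the resolvent equation $L(\vk) G(\cdot, y; \vk) = \delta(\cdot-y)I$ one reads off a system of first-order ODEs in $x$ for the kernel $G(x,y;\vk)$ away from the diagonal. Decomposing the kernel into its $y<x$ and $y>x$ pieces, using the jump of $G_0$ built into the definitions, and restricting to $y=x$, one obtains a closed first-order system (with signs dictated by $\sgn(\iu\vk^2)$) of the schematic form
\begin{align*}
	\p_x \vr & = -2\vk\bigl(q\, g_{21} + r\, g_{12}\bigr),\\
	\p_x g_{12} & = 2\iu\vk^2\, g_{12} - \vk q\bigl(2+\vr\bigr),\\
	\p_x g_{21} & = -2\iu\vk^2\, g_{21} - \vk r\bigl(2+\vr\bigr).
\end{align*}
In particular, the first identity rewrites $\rho(\vk) = \tfrac12 \p_x \log(2+\vr)$, placing the density in the role of the logarithmic derivative of a generating function.

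For the time evolution, I would invoke the Kaup--Newell Lax pair for \eqref{DNLS}, namely $\p_t L(\vk) = [L(\vk), A_{\DNLS}(\vk)]$, with $A_{\DNLS}(\vk)$ a second-order matrix differential operator whose coefficients are polynomials in $q, r, q', r', \vk^2$. This yields $\p_t R(\vk) = [A_{\DNLS}(\vk), R(\vk)]$, and evaluating the resulting kernel identity at $y=x$ produces closed-form expressions for $\p_t \vr, \p_t g_{12}, \p_t g_{21}$ as polynomial combinations of $q, r$, their first derivatives, and the diagonal Green's functions. Differentiating $\rho(\vk) = -\vk(q g_{21}+r g_{12})/(2+\vr)$ in $t$, substituting these formulas, and expanding $\p_x j_{\DNLS}(\vk)$ using the Riccati relations of the first step leads, after several cancellations, to $\p_t\rho(\vk)+\p_x j_{\DNLS}(\vk)=0$.

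The main obstacle is partly algebraic bookkeeping and partly the identification of the precise flux: the summands $-\iu\vk^2 qr$, $-2\vk^2\rho(\vk)$, and $-qr\rho(\vk)$ in $j_{\DNLS}(\vk)$ arise from the lower-order part of $A_{\DNLS}(\vk)$ together with the counterterms needed so that the normalising constant $-1$ in the definition of $\vr$ does not obstruct integrability. A cleaner organisation, consistent with the title and with the strategy of \cite{HKV:NLS, KV:KdV:AnnMath}, is to first prove the microscopic conservation law under the auxiliary $A(\vk)$-flow---whose Hamiltonian is the logarithmic perturbation determinant from \cite{Rybkin:KdV:Cons Law, Simon:Trace} and for which $\rho(\vk)$ is essentially a variational derivative---and then transfer the identity to the DNLS flow by matching the $\vk\to\infty$ asymptotic expansions and exploiting the commutativity of flows in the integrable hierarchy on the Schwartz class.
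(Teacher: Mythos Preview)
Your core strategy is exactly what the paper does: derive the spatial ODEs for $\vr, g_{12}, g_{21}$ from the resolvent, compute their time derivatives via the Kaup--Newell Lax representation $\partial_t L(\vk) = [P_{H_{\DNLS}}, L(\vk)]$ (whence $\partial_t G = P_{H_{\DNLS}}G - G P_{H_{\DNLS}}$ on the diagonal), and then verify $\partial_t\rho + \partial_x j_{\DNLS}=0$ by direct algebra.

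That said, your ``schematic'' spatial identities are wrong in a way that is more than cosmetic. The correct relations (the paper's \eqref{rho-ID}--\eqref{g21-ID}) read
\[
\vr' = 2\vk\bigl(qg_{21} - rg_{12}\bigr),\qquad g_{12}' = -2\iu\vk^2 g_{12} - \vk q(\vr+1),\qquad g_{21}' = 2\iu\vk^2 g_{21} + \vk r(\vr+1).
\]
Note that $\vr'$ involves $qg_{21}-rg_{12}$, whereas $\rho$ involves $qg_{21}+rg_{12}$; and the inhomogeneous factor is $\vr+1$, not $\vr+2$. Consequently your assertion $\rho(\vk) = \tfrac12\partial_x\log(2+\vr)$ is false, and conceptually so: were it true, $A(\vk)=\int\rho\,dx$ would vanish identically for Schwartz $q$, contradicting the expansion $A(\vk) = -\tfrac{\iu}{2}M + O(|\vk|^{-2})$. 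The density $\rho$ is genuinely \emph{not} a total $x$-derivative, so this logarithmic-derivative picture should be discarded before you attempt the bookkeeping.

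Your alternative organisation at the end---prove the microscopic law first for the $A(\kappa)$ flow and then transfer to DNLS via $\kappa\to\infty$ asymptotics and commutativity---is \emph{not} how the paper proceeds. The paper handles the $A(\kappa)$ flow (Proposition~\ref{Thm:A flow}) and the DNLS flow independently, each by inserting its own Lax operator $P$ and computing directly. Extracting $j_{\DNLS}$ as a coefficient in the large-$\kappa$ expansion of $j_{A(\kappa)}(\vk,\kappa)$ would require expanding $g_{12}(\kappa), g_{21}(\kappa), \vr(\kappa)$ to the relevant order (cf.\ \eqref{biHam2}, \eqref{vr asymptotics}), which is no shorter than the direct calculation.
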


\begin{rem} We give some remarks as follows. 
	\begin{enumerate}
		 \item In this paper, we consider microscopic conservation law for the  solution of \eqref{DNLS}. That is the reason why we consider the Schwartz solution. In addition,  mass threshold is another interesting problem.  Please refer to \cite{HaOz-94, MTX:DNLS:Exist}\cite{Wu-DNLS} and reference therein. Recently, it is very interesting that Bahouri and Perelman obtain global well-posedness for \eqref{DNLS} in $H^{1/2}(\R)$ without mass restriction  by combining the profile decomposition techniques with the integrability structure  in \cite{BahouriPerelman:DNLS:GWP}.
		\item  The above microscopic conservation law  corresponds to macroscopic conservation law of \eqref{DNLS} in \cite{KlausS:DNLS}. In fact, we have
		\begin{align*}A(\vk) =  &  \sgn(\iu\vk^2)\sum_{j=1}^\infty\frac{(-1)^{j-1}}j \tr\left\{\left(\sqrt{R_0}\left(L - L_0\right)\sqrt{R_0}\right)^j\right\} \\
		= & \int_{\R} \rho(\vk) \dx.
		\end{align*}
		 Compared to macroscopic conservation law,  microscopic conservation law with coercivity can be used to show the local smoothing estimate for the solution of \eqref{DNLS} with small mass in $H^s\cap \Schwartz$, $s\in (0,1/2)$ and others. Please refer to  \cite{KV:KdV:AnnMath, KVZ:KdV:GAFA}.
		\item The leading order term in $\rho$ has the following form
		$$\left|\Re \int_{\R}\rho^{[2]}(\vk) dx \right| \approx |\vk|^2  \|q\|^2_{H^{-1/2}_{\vk}},$$
	which captures  $L^2$ norm of the part of $q$ living at frequencies $|\xi|\lesssim |\kappa|^2$ (See also \eqref{LogarithmicBound} and \eqref{A-def'}). Combining  the above conservation law and the similar argument on Besov norm estimate  in \cite[Section ~$3$]{KVZ:KdV:GAFA, KlausS:DNLS} for any Hamiltonian flow  preserving $A(\vk)$ for all $|\vk|\geq 1$, we can obtain  a uniform bound of  $H^s(\R)$ norm of the Schwartz solution of \eqref{DNLS} with small mass 
		\begin{equation}\label{APBound}
		\|q(t)\|_{H^s}\lesssim \|q(0)\|_{H^s}, \text{~~for~~} s\in (0,1/2).
		\end{equation}
		\end{enumerate}
	\end{rem}

 Lastly, the paper is organized as follows. In Section \ref{S:2}, we recall some notations and preliminary estimates. In Section \ref{S:3}, we show the existence and some properties of the Green's function related to the Lax operator $L(\kappa)$. In Section \ref{S:4}, we introduce  the invairant quantity  $A(\kappa)$ from the logarithmic perturbation determinant and show its microscopic conservation laws for the $A(\kappa)$ flow  and  the DNLS flow.

\subsection*{Acknowledgements}
  X. Tang  was supported by NSFC (No. 12001284), and G. Xu  was supported by NSFC (No. 11671046,  and No. 11831004) and by National Key Research and Development Program of China (No. 2020YFA0712900). The authors would like to thank Professor Monica Visan for her valuable comments and suggestions.

\section{Some notation and preliminary estimates}\label{S:2}

In this paper, we take $ r = -\bar{q}$ and  choose
$
s\in (0, \tfrac12), $
and all implicit constants can depend on $s$.  We denote   $\Schwartz$ the Schwartz function,  and introduce the notation
\begin{equation}\label{Bdelta}
\Bd := \left\{q\in H^s:\|q\|_{H^s}\leq \delta\right\}.
\end{equation}
which can be ensured by  scaling argument under the assumption that the mass $\|q\|_{L^2}$ is small enough.

We use the inner product on $L^2(\R)$ as follows
$$
\langle f, g\rangle = \int \overline{f(x)} g(x)\,dx,
$$
 which also gives the dual product between  $H^s(\R)$ and $H^{-s}(\R)$. In addtion, If $F:\Schwartz\to\C$ is $C^1$, we have
\begin{equation}\label{FunctDeriv}
\tfrac{d\ }{d\theta}\Big|_{\theta=0} F(q+\theta f)  = \bigl\langle \bar f, \tfrac{\delta F}{\delta q}\bigr\rangle - \bigl\langle f, \tfrac{\delta F}{\delta r}\bigr\rangle.
\end{equation}
The Fourier transform is defined by
\begin{align*}
\hat f(\xi) = \tfrac{1}{\sqrt{2\pi}} \int_\R e^{-i\xi x} f(x)\,dx,  \qtq{whence}  \widehat{fg}(\xi) = \tfrac{1}{\sqrt{2\pi}} [\hat f * \hat g] (\xi).
\end{align*}

\subsection{Sobolev spaces}

For complex $\kappa$ with  $|\kappa| \geq 1$ and \(\sigma\in \R\) we define the norm
\[
\|q\|_{H^{\sigma}_\kappa}^2 := \int_{\R} \left(4|\kappa|^4 + \xi^2\right)^\sigma |\hat q(\xi)|^2\,d\xi
\]
and write \(H^\sigma  := H^{\sigma}_1\).

For $0<s<\frac12$, simple calculation yields the Sobolev inequality
\begin{align}\label{Linfty bdd}
\|f\|_{L^\infty} \lesssim \|\hat{f}\|_{L^1}\leq \|f\|_{H^{s+\frac{1}2 }_\kappa} \bigl\|(|\xi|^2+4|\kappa|^4)^{-\frac{2s+1}4}\bigr\|_{L^2}\lesssim |\kappa|^{-2s}\|f\|_{H^{s+\frac{1}2 } _\kappa}.
\end{align}
Consequently, we have the following algebra property of $H^{s+\frac{1}2 }_\kappa$ space:
\begin{equation}\label{E:algebra}
\| f g \|_{H^{s+\frac{1}2 }_\kappa} \lesssim |\kappa|^{-2s} \| f \|_{H^{s+\frac{1}2}_\kappa} \| g \|_{H^{s+\frac{1}2}_\kappa}.
\end{equation}

By duality and  the fractional product rule in \cite{ChristW:FracRule}, Sobolev embedding, and \eqref{Linfty bdd}, we obtain
\begin{align}\label{multiplier bdd on ss}
\|qf\|_{H^{s-1/2}}\lesssim 
&  |\kappa|^{-2s} \|q\|_{H^{s-1/2}}\|f\|_{H^{s+\frac12}_\kappa}.
\end{align}

\subsection{Operator estimates and Trace}
For \(0<\sigma<1\) and \(i \kappa^2 \in \R\),  \(|\kappa|\geq 1\) we define the operator \((i \kappa^2 \mp \p)^{-\sigma}\) using the Fourier multiplier \((i \kappa^2 \mp i\xi)^{-\sigma}\) where, for \(\arg z\in (-\pi,\pi]\), we define
\begin{equation}\label{z to the sigma}
z^{-\sigma} = |z|^{-\sigma}e^{-i\sigma \arg z}.
\end{equation}
Therefore for all $\iu \kappa^2 \in \R$ with \(|\kappa|\geq 1\) we have
\[
\left((i\kappa^2 \mp \p)^{-\sigma}\right)^* = (i \kappa^2 \pm \p)^{-\sigma},
\]
and
\[
\left\|(i\kappa^2 \mp \p)^{-\sigma}\right\|_{\op}\leq |\kappa|^{-2\sigma}.
\]

We denote $\I_p$  the Schatten class of compact operators on $L^2(\R)$ whose singular
numbers are $l^p$ summable. $\I_p$ is complete and is an embedded subalgebra of bounded operators on $L^2(\R)$. Moreover, we have 
\begin{equation}\label{Ip: embedding}
\I_p \subset \I_q, \text{~~for~~} p\leq q.
\end{equation}
  Let us recall some facts about the class $\I_p$ that we will use repeatedly in Section \ref{S:3}: An operator A on $L^2(\R)$ is Hilbert–Schmidt class ($\I_2$) if and only if it admits an integral kernel $a(x, y) \in L^2(\R \times \R)$, and
$$\|A\|^2_{\op}\leq \|A\|_{\I_2}^2=\iint_{\R\times \R} |a(x,y)|^2 \, \dx \dy. $$
The product of two Hilbert–Schmidt operators is trace class,; Moreover,  we have
\begin{align*}
\tr(AB):= & \iint_{\R\times \R} a(x,y)b(y,x) \dy \dx = \tr(BA), \\ |\tr(AB)|&  \leq   \|AB\|_{\I_1} \leq \|A\|_{\I_2}\|B\|_{\I_2}
\end{align*}
The class $\I_p$ forms a two-sided ideal in the algebra of
bounded operators on $L^2$; indeed, for any bounded operators $B, C $ on $L^2(\R)$, we have
$$\|BAC\|_{ \I_p} \leq \|B\|_{\op} \|A\|_{\I_p} \|C\|_{\op}.$$
We can refer to  \cite{GohGoldK:book, Simon:Trace} for more details.

The following estimates are the elementary estimates in this paper.

\begin{lem}\label{L:BasicBounds}
Let $s\in (0, 1/2)$, $\iu \kappa^2 \in \R$, we have
\begin{align}
\| (\p + \iu \kappa^2 )^{-\frac12}\kappa q(\p- \iu \kappa^2 )^{ - \frac12}\|_{\I_2}
&  \lesssim  \|q\|_{L^2},    \label{LogarithmicBound}\\ 
\| (\p + \iu \kappa^2 )^{s-\frac12}\kappa q(\p- \iu \kappa^2 )^{ - \frac12}\|_{\I_2} &\lesssim |\kappa| \|q\|_{H^{s-\frac12}_\kappa},  \label{BasicBound}
\end{align}
and 
\begin{align}
\|(\p+ i \kappa^2 )^{-s-\frac12} f (\p- i \kappa^2 )^{-s-\frac12} \|_{\op} &\lesssim |\kappa|^{-2s} \|f\|_{H^{-s-\frac12 }_\kappa}. \label{BasicOpBound}
\end{align}
\end{lem}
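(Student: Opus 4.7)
The three estimates split into two Hilbert--Schmidt bounds, \eqref{LogarithmicBound} and \eqref{BasicBound}, and one operator-norm bound, \eqref{BasicOpBound}. The plan for the Hilbert--Schmidt bounds is to pass to the Fourier side via Plancherel, and for the operator-norm bound to argue by duality using the algebra property \eqref{E:algebra}. For a pair of Fourier multipliers $A = a(\p)$ and $B = b(\p)$, the operator $A(\kappa q) B$ has Hilbert--Schmidt norm
\begin{equation*}
\|A(\kappa q) B\|_{\I_2}^2 = \frac{|\kappa|^2}{2\pi}\iint |a(\xi)|^2\,|\hat q(\xi - \eta)|^2\,|b(\eta)|^2\,d\xi\,d\eta,
\end{equation*}
and, since $\iu\kappa^2 \in \R$ and $|\kappa|\geq 1$, the symbols of $(\p \pm \iu\kappa^2)^{-\sigma}$ have modulus squared $(\xi^2 + |\kappa|^4)^{-\sigma}$. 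For \eqref{LogarithmicBound} I would substitute $\zeta = \xi - \eta$ and apply Cauchy--Schwarz to the inner $\eta$-integral to dominate it uniformly in $\zeta$ by $\int (\eta^2 + |\kappa|^4)^{-1}\,d\eta = \pi/|\kappa|^2$, which immediately gives $\|T\|_{\I_2}^2 \lesssim \|q\|_{L^2}^2$.

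For \eqref{BasicBound} the same substitution yields $\|T\|_{\I_2}^2 = \frac{|\kappa|^2}{2\pi} \int F(\zeta)\,|\hat q(\zeta)|^2\,d\zeta$ with
\begin{equation*}
F(\zeta) := \int \bigl((\eta + \zeta)^2 + |\kappa|^4\bigr)^{s - 1/2}\bigl(\eta^2 + |\kappa|^4\bigr)^{-1/2}\,d\eta,
\end{equation*}
and the task is the pointwise bound $F(\zeta) \lesssim (\zeta^2 + |\kappa|^4)^{s - 1/2}$. Rescaling $\eta = |\kappa|^2 u$ and $\zeta = |\kappa|^2 \beta$ reduces this to a $\kappa$-free inequality, which I would establish by decomposing the $u$-axis into $|u| \leq |\beta|/2$, $|u + \beta| \leq |\beta|/2$, and the complement; in each region one of the two factors is uniformly of order $|\beta|^{2s-1}$ or $|\beta|^{-1}$, while the remaining integral either contributes a convergent constant or a controlled power of $|\beta|$, using $s - 1/2 < 0$ to extract the decay $(\beta^2+1)^{s-1/2}$. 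This is the main obstacle, since a careless execution of the split picks up a parasitic logarithm $\log(|\zeta|/|\kappa|^2)$ in the regime $|\zeta| \gg |\kappa|^2$ that must be avoided through a more refined treatment of the critical sub-region $|u| \leq |\beta|/2$, where $|u + \beta| \sim |\beta|$ but integration of $(u^2+1)^{-1/2}$ is only logarithmically controlled.

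For \eqref{BasicOpBound} I would argue by duality. For $g, h \in L^2(\R)$ the identity
\begin{equation*}
\langle g, (\p + \iu\kappa^2)^{-s-1/2} f (\p - \iu\kappa^2)^{-s-1/2} h\rangle = \int f(x)\,\overline{(\iu\kappa^2 - \p)^{-s-1/2} g(x)}\,(\p - \iu\kappa^2)^{-s-1/2} h(x)\,dx
\end{equation*}
pairs $f$ against the product of the two remaining factors; via the $H^{-s-1/2}_\kappa$--$H^{s+1/2}_\kappa$ duality this is bounded by $\|f\|_{H^{-s-1/2}_\kappa}$ times the $H^{s+1/2}_\kappa$ norm of that product. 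Invoking the algebra property \eqref{E:algebra} produces a factor $|\kappa|^{-2s}$ times the product of the individual $H^{s+1/2}_\kappa$ norms; each of those is bounded by $\|g\|_{L^2}$ or $\|h\|_{L^2}$ since the Sobolev weight $(\xi^2 + 4|\kappa|^4)^{s + 1/2}$ is absorbed by the symbol modulus squared $|(\iu\kappa^2 \mp \iu\xi)^{-s-1/2}|^2 = (\xi^2 + |\kappa|^4)^{-s-1/2}$. Taking the supremum over unit $g, h \in L^2$ delivers $\|T\|_{\op} \lesssim |\kappa|^{-2s}\|f\|_{H^{-s-1/2}_\kappa}$.
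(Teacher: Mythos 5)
Your Plancherel reduction of the two Hilbert--Schmidt norms to the Fourier-side double integral is exactly the paper's starting point; your Cauchy--Schwarz argument for \eqref{LogarithmicBound} is correct and in fact more self-contained than the paper, which simply cites \cite[Lemma~4.1]{KVZ:KdV:GAFA}; and your duality argument for \eqref{BasicOpBound} coincides with the paper's proof (pair against $g,h\in L^2$, use the $H^{s+\frac12}_\kappa$--$H^{-s-\frac12}_\kappa$ duality together with the algebra bound \eqref{E:algebra}, and absorb the remaining weights into the symbols). Those two estimates are in good shape.

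The gap is in \eqref{BasicBound}, precisely at the step you postpone. The pointwise bound $F(\zeta)\lesssim(\zeta^2+|\kappa|^4)^{s-\frac12}$ is not a matter of a more careful decomposition: it fails. On the sub-region $|u|\le|\beta|/2$ of your rescaled integral the integrand is bounded \emph{below} by a constant multiple of $(1+\beta^2)^{s-\frac12}(1+u^2)^{-\frac12}$, so in fact $F(\zeta)\gtrsim(\zeta^2+|\kappa|^4)^{s-\frac12}\log\bigl(4+\zeta^2|\kappa|^{-4}\bigr)$; and since your Hilbert--Schmidt computation is an identity, testing on $\hat q$ concentrated near a single frequency $|\zeta_0|\gg|\kappa|^2$ shows the logarithm genuinely enters the $\I_2$ norm, i.e.\ the ratio of the two sides of \eqref{BasicBound} grows like $\log\bigl(|\zeta_0|/|\kappa|^2\bigr)$ in that regime. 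So the ``parasitic logarithm'' you worry about is intrinsic, not an artifact of a careless split, and no refined treatment of the region $|u|\le|\beta|/2$ can remove it. (You are in good company: the paper's own proof invokes the same convolution inequality $\int(1+(\xi+\eta)^2)^{s-\frac12}(1+\eta^2)^{-\frac12}\,d\eta\lesssim(4+\xi^2)^{s-\frac12}$ without proof, and it is subject to the same logarithmic loss; in the sharp form of such $\I_2$ bounds in \cite{KVZ:KdV:GAFA, HKV:NLS} the logarithmic weight appears explicitly.) To complete your write-up you must therefore either carry the factor $\log\bigl(4+\xi^2|\kappa|^{-4}\bigr)$ in the statement and track it through its applications (e.g.\ \eqref{g12-LO}), or trade it for a small power via $\log\bigl(4+\xi^2|\kappa|^{-4}\bigr)\lesssim_\epsilon|\kappa|^{-4\epsilon}(4|\kappa|^4+\xi^2)^{\epsilon}$, which yields $\|(\p+\iu\kappa^2)^{s-\frac12}\kappa q(\p-\iu\kappa^2)^{-\frac12}\|_{\I_2}\lesssim_\epsilon|\kappa|^{1-2\epsilon}\|q\|_{H^{s-\frac12+\epsilon}_\kappa}$; as written, your second step cannot be carried out.
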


\begin{proof} The estimate \eqref{LogarithmicBound} can be proved  by \cite[Lemma~4.1]{KVZ:KdV:GAFA}.   For \eqref{BasicBound},   it suffices to consider the case \(\iu \kappa^2 = 1\) by scaling argument. By Plancherel's Theorem, we have
\begin{align*}
\|(1 - \p)^{s-\frac12} q(1 + \p)^{-\frac12}\|_{\I_2}^2 &=  \tr\left\{(1 - \p^2)^{-\alpha}q(1 - \p^2)^{-\beta}\bar q\right\}\\
&=  \tfrac{1}{2\pi} \iint_{\R^2} \frac{|\hat q(\xi-\eta)|^2\,}{\left(1 + \xi ^2\right)^{\frac12-s}\left(1 + \eta^2\right)^{\frac12}}d\eta\, d\xi,
\end{align*}
Note that
\begin{equation*}
\int_{\R} \frac1{\left(1 + (\xi + \eta)^2\right)^{\frac12 -s}\left(1 + \eta^2\right)^{\frac12}}\,d\eta \lesssim (4 + \xi^2)^{s-\frac12},
\end{equation*}
we can obtain  the estimate \eqref{BasicBound}. 

Lastly, we can obtain \eqref{BasicOpBound}  by duality and \eqref{E:algebra} as follows 
\begin{equation*}
\biggl| \int_{\R} f g h \, dx \biggr|   \lesssim \| f \|_{H^{-s-\frac12}_\kappa} \| gh \|_{H^{s+\frac12}_\kappa} \lesssim |\kappa|^{- 2s}\| f \|_{H^{-s-\frac12}_\kappa} \| g \|_{H^{s+\frac12}_\kappa}  \| h \|_{H^{s+\frac12}_\kappa}.
\end{equation*}
This completes the proof.
\end{proof}

\section{The diagonal Green's functions}\label{S:3}
In this Section, motivated by the ideas in \cite{HKV:NLS, KV:KdV:AnnMath}, we introduce three key quantities $g_{12}$, $g_{21}$, and $\gamma$ from the diagonal Green's function related to  the Lax operator $L(\kappa)$ for \eqref{DNLS}, and establish some elementary estimates about them.  Recall that
\begin{equation}\label{Lax L'}
L(\kappa) = L_0(\kappa) +  \begin{bmatrix}0 & -\kappa q\\-\kappa r&0 \end{bmatrix} \text{~~where~~} L_0(\kappa) := \begin{bmatrix}\p+i\kappa^2 & 0\\0&\p-i\kappa^2\end{bmatrix}.
\end{equation}
Since we  only consider the case $\iu\kappa^2\in\R$ with $|\kappa|\geq 1$, we have
\begin{align}\label{L conjugation}
L(\kappa)^* =   \begin{bmatrix} -\p-\iu \bar{k}^2 & -\bar{\kappa} \bar{r} \\-\bar{\kappa}\bar{q} & -\p+i\bar{\kappa}^2 \end{bmatrix}  
=  -  \begin{bmatrix}1  & 0 \\
0 & -1 \end{bmatrix} 
L(-\bar{\kappa})  \begin{bmatrix} 1 & 0 \\
0 & -1 \end{bmatrix}. 
\end{align}

We now construct the Green's function associated to $L_0(\kappa)$ and $L(\kappa)$, respectively. By the Fourier transformation, the resolvent operator
\begin{equation*}
R_0(\kappa) := L_0(\kappa)^{-1} = \begin{bmatrix}(\p+\iu\kappa^2)^{-1} & 0\\0&(\p-\iu\kappa^2)^{-1}\end{bmatrix}
\end{equation*}
admits the integral kernel
\begin{equation}\label{G_0}
G_0(x,y;\kappa) = e^{-\iu\kappa^2|x - y|}\begin{bmatrix}\bbo_{\{y<x\}}&0\\0&-\bbo_{\{x<y\}}\end{bmatrix} \quad\text{for $\iu\kappa^2 >0$}.
\end{equation}
For $\iu\kappa^2<0$, we may use ${G}_0(x,y;\kappa)=-G_0(y,x;-\bar{\kappa})$ by \eqref{L conjugation}.

By the perturbation theory and the resolvent identity, the resolvent operator $R(\kappa):=L(\kappa)^{-1}$ can be formally  expressed as
\begin{align}
R = & R_0 + \sum_{\ell = 1}^\infty (-1)^{\ell}\sqrt{R_0}\left(\sqrt{R_0}(L - L_0)\sqrt{R_0}\right)^\ell\sqrt{R_0},
 \label{Resolvent} 
\end{align}
where 
\begin{align}
\sqrt{R_0}(L - L_0)\sqrt{R_0} = & -\begin{bmatrix} 0 &\Lambda \\ \Gamma & 0\end{bmatrix}, \label{LambdaGammaJob} \\
\Lambda := (\p+\iu\kappa^2)^{-\tfrac12} \kappa q(\p-\iu\kappa^2)^{-\tfrac12}
\text{~and~} & 
\Gamma :=  (\p-\iu\kappa^2)^{-\tfrac12} \kappa r( \p+\iu\kappa^2)^{-\tfrac12}, \label{D:LambdaGamma}
\end{align}
and fractional powers of  $R_0$ are defined via \eqref{z to the sigma}. By \eqref{LogarithmicBound}, we have
\begin{equation}\label{Lambda}
\|\Lambda\|_{\I_2} = \|\Gamma\|_{\I_2} \lesssim \|q\|_{L^2} \lesssim \delta.
\end{equation}

Now we have the convergence of  series \eqref{Resolvent} as part of the following result.

\begin{prop}[Existence of the Green's function]\label{P:R}
There exists \(\delta>0\) such that $L(\kappa)$ is invertible as an operator on $L^2(\R)$, for all \(q\in \Bd\) and all $\iu \kappa^2 \in \R$ with \(|\kappa|\geq 1\).  The resolvent operator $R(\kappa):=L(\kappa)^{-1}$ admits an integral kernel $G(x,y;\kappa)$ satisfying
\begin{equation}\label{G conjugation}
    \begin{bmatrix}
      G_{11}(x,y,\kappa) & G_{12}(x,y,\kappa) \\
      G_{21}(x,y,\kappa) & G_{22}(x,y,\kappa) \\
    \end{bmatrix}
    =
    -
    \begin{bmatrix}
      \bar{G}_{11}(y,x,-\bar{\kappa}) & \bar{G}_{21}(y,x,-\bar{\kappa}) \\
      \bar{G}_{12}(y,x,-\bar{\kappa}) & \bar{G}_{22}(y,x,-\bar{\kappa}) \\
    \end{bmatrix}
\end{equation}
 such that the mapping
\begin{equation}\label{tensor eqn}
 H^s_\kappa(\R)\ni q \mapsto G - G_0 \in  H^{ s+ \frac12}_\kappa\otimes H^{  s + \frac12 }_\kappa
\end{equation}
is continuous. Moreover, $G-G_0$ is continuous as a function of $(x,y)\in\R\times \R$. Lastly, we have
\begin{align}
\p_xG(x,y;\kappa) &= \begin{bmatrix}-\iu\kappa^2&\kappa q(x)\\ \kappa r(x)& \iu\kappa^2\end{bmatrix}G(x,y;\kappa) + \begin{bmatrix}\delta(x-y)&0\\0&\delta(x-y)\end{bmatrix},\label{xID}\\
\p_yG(x,y;\kappa) &= G(x,y;\kappa)\begin{bmatrix}\iu\kappa^2&- \kappa q(y)\\ -\kappa r(y)&-\iu\kappa^2\end{bmatrix} - \begin{bmatrix}\delta(x-y)&0\\0&\delta(x-y)\end{bmatrix}\label{yID}
\end{align}
in the sense of distributions.
\end{prop}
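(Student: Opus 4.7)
The plan is to build $R(\kappa) = L(\kappa)^{-1}$ as a convergent Neumann series, read off its integral kernel, establish the tensor-product regularity via the Schatten estimates in Lemma~\ref{L:BasicBounds}, and then deduce the pointwise identities from the operator equations $LR = RL = I$.

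First I would establish invertibility. Writing $A := \sqrt{R_0}(L-L_0)\sqrt{R_0}$, the block expressions \eqref{LambdaGammaJob}--\eqref{D:LambdaGamma} together with \eqref{Lambda} give $\|A\|_{\I_2} \lesssim \delta$. Choosing $\delta$ small enough that $\|A\|_{\op}<1$, the Neumann series $(I+A)^{-1} = \sum_{\ell \ge 0}(-A)^\ell$ converges absolutely in operator norm, since $\I_2$ embeds into the bounded operators. Multiplying by $\sqrt{R_0}$ on either side recovers the formula \eqref{Resolvent} for $R$, and a direct verification shows $LR = RL = I$. Each $\ell \ge 1$ term of \eqref{Resolvent} contains at least two Hilbert--Schmidt factors, so it is trace class and admits a continuous integral kernel; summing in $\ell$ yields $G-G_0$.

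The heart of the argument is the regularity claim \eqref{tensor eqn}. Let $\Psi^{s+\frac12}$ denote the Fourier multiplier with symbol $(4|\kappa|^4+\xi^2)^{(s+\frac12)/2}$, so that membership in $H^{s+\frac12}_\kappa \otimes H^{s+\frac12}_\kappa$ is equivalent to $\Psi^{s+\frac12}\,(\cdot)\,\Psi^{s+\frac12}$ being Hilbert--Schmidt. Term-by-term in \eqref{Resolvent}, I would peel off the outermost $\sqrt{R_0}$ and $\Psi^{s+\frac12}$ factors so that the central piece is exactly of the form controlled by \eqref{BasicBound}, namely $(\p+\iu\kappa^2)^{s-\frac12}\kappa q(\p-\iu\kappa^2)^{-\frac12}$ or its adjoint. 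The leftover flanking multipliers collapse into symbols of order zero (using $(4|\kappa|^4+\xi^2)^{1/2} \approx |\iu\kappa^2 \pm \iu\xi|$ for $|\kappa|\ge 1$), while any extra interior $A$-factors appearing for $\ell\ge 2$ contribute only bounded operators via \eqref{BasicOpBound}. This produces an $\I_2$-bound of the form $C\delta^{\ell-1}\|q\|_{H^{s-\frac12}_\kappa}$ on the $\ell$-th term, and summation in $\ell$ is then geometric for $\delta$ small. Continuity of the map $q \mapsto G-G_0$ follows because each Neumann term is polynomial in $(q,r)$ and the bounds are uniform on $\Bd$.

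Joint continuity of $G-G_0$ on $\R\times\R$ is then immediate from the tensor-product regularity combined with the Sobolev embedding $H^{s+\frac12}_\kappa \hookrightarrow L^\infty$ in \eqref{Linfty bdd}, applied in each variable. The conjugation symmetry \eqref{G conjugation} is deduced from \eqref{L conjugation}: both sides of \eqref{G conjugation} are seen to invert $L(\kappa)$ and therefore coincide by uniqueness of the resolvent. Finally, \eqref{xID} is obtained by applying $L(\kappa)$ in $x$ to both sides of $LR=I$ and reading off entries of the $2\times 2$ delta-function identity, while \eqref{yID} follows from $RL=I$ after integrating by parts in $y$. The principal obstacle throughout is the bookkeeping in the regularity estimate: arranging the splits of $\Psi^{s+\frac12}$ and $\sqrt{R_0}$ so that the central piece matches \eqref{BasicBound} exactly and all leftover Fourier multipliers remain uniformly bounded in $|\kappa|\ge 1$; once this is done, the rest of the proof is routine.
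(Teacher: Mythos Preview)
Your proposal is correct and follows essentially the same route as the paper: Neumann series convergence via the $\I_2$ bound \eqref{Lambda}, the tensor-product regularity from \eqref{BasicBound} (the paper phrases this as $R-R_0$ being Hilbert--Schmidt from $H^{-s-\frac12}_\kappa$ to $H^{s+\frac12}_\kappa$, which is equivalent to your $\Psi^{s+\frac12}$ conjugation), continuity from the Sobolev embedding, and the distributional identities from $LR=RL=I$. Your write-up is in fact a bit more explicit than the paper's on the bookkeeping for the regularity estimate, but there is no substantive difference in strategy.
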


\begin{proof} The proof is similar as those in \cite[ Proposition $3.1$]{HKV:NLS}. We sketch the proof for completeness.  From \eqref{Lambda}, we have\footnote{Since the convergence of the tails of the series \eqref{Resolvent} is key to the existence of the Green's function, we can pay some regularity $s>s_0>0$ with $s_0\in (0, 1/2)$,  use the $\I_p$ estimate instead of the $\I_2$ estimate to remove  small mass assumption.  Here we pay attention to the regularity problem of the solution for \eqref{DNLS} in this paper. }
$$
\bigl\| \sqrt{R_0}(L - L_0)\sqrt{R_0} \,\bigr\|_{\I_2}\leq \sqrt2 \|\Lambda\|_{\I_2} \lesssim  \|q\|_{L^2} \lesssim \delta
$$
uniformly for $|\kappa|\geq 1$.  Thus, for $\delta>0$ sufficiently small, the series \eqref{Resolvent} converges in operator norm uniformly for $|\kappa|\geq 1$.  It is easy to  verify that the sum acts as an inverse to $L(\kappa)$.

From \eqref{Lambda},  we also have  $R-R_0 \in \I_2$.  In particular, the operator $R-R_0$ admits an integral kernel  $G-G_0$  in $L^2(\R\times \R)$. Moreover, by  \eqref{BasicBound}, we know that $R-R_0$ converges in the sense of Hilbert--Schmidt operators from $H^{ -s -\frac12  }_\kappa$ to $H^{s+\frac 12 }_\kappa$, which implies  \eqref{tensor eqn}.

By \eqref{tensor eqn} , we obtain that the kernel function $G-G_0$  is  continuous in $(x,y)$ since $s+ \frac 12 >\frac12$.

For regular $q$, the identities \eqref{xID} and \eqref{yID} precisely express the fact that $G$ is an integral kernel for $R(\kappa)$. They also hold for irregular $q$ by \eqref{tensor eqn}.
\end{proof}

Let us define  $\vr$, $g_{12}$ and $g_{21}$  as follows:
\begin{align}
\vr(x;\kappa) &:=\sgn(\iu\kappa^2) \bigl[ G_{11}(x,x;\kappa) - G_{22}(x,x;\kappa) \bigr]  - 1, \label{def gama}\\
g_{12}(x;\kappa) &:=  \sgn(\iu\kappa^2) G_{12}(x,x;\kappa), \label{def g12}\\
g_{21}(x;\kappa) &:=  \sgn(\iu\kappa^2) G_{21}(x,x;\kappa), \label{def g21}
\end{align}
where $G_{ij}(x,y,\kappa)$, $1\leq i, j\leq 2$, are the entries of the integral kernel $G(x,y,\kappa)$. If $q\in \BdS$, we may use  \eqref{xID} and \eqref{yID}  to obtain
\begin{align}
\vr' &= 2\kappa\left(qg_{21} - rg_{12}\right),\label{rho-ID}\\
g_{12}' &= -2i\kappa^2 g_{12} -\kappa q[\vr + 1],\label{g12-ID}\\
g_{21}' &=  2i\kappa^2 g_{21} +\kappa r[\vr + 1].\label{g21-ID}
\end{align}
By \eqref{G conjugation}, we have 
\begin{align}
\vr(\kappa) &= \bar \vr(-\bar{\kappa}) \qtq{and} g_{12}(\kappa) = - \bar g_{21}(-\bar{\kappa}).\label{grho-symmetries}
\end{align}
Moreover, by \eqref{rho-ID}, \eqref{g12-ID}, and \eqref{g21-ID} , we have the following identity
\begin{align}
 & \frac{\kappa^2-\vk^2}{\kappa^2}[ g_{12}'(\kappa)g_{21}(\vk)+g_{21}'(\kappa)g_{12}(\vk) ]
\notag \\
&=
[ g_{12}(\kappa)g_{21}(\vk)+g_{21}(\kappa)g_{12}(\vk) ]'
+
\frac{\vk}{2\kappa}[ (\vr(\kappa)+1)(\vr(\vk)+1) ]',\label{micro As commute}
\end{align}
which is closely connected to the commutativity of the $A(\kappa)$'s flows under the Poisson bracket \eqref{PoissonBracket} .

From the series representation \eqref{Resolvent} of   $R(\kappa)$ in $q$ and $r$, we can deduce the corresponding series representations of $g_{12}$, $g_{21}$, and $\gamma$ in $q$ and $r$.  We  use the square brackets notation as follows
\begin{align}\label{g12-Series terms}
g_{12}\sbrack{2m+1}(\kappa) &:= \sgn(\iu\kappa^2)\Big\< \delta_x,\ (\p+i\kappa^2)^{-\frac12}\Lambda \left(\Gamma\Lambda\right)^m(\p-i\kappa^2)^{-\frac12}\delta_x\Big\>, \\
    \label{g21-Series terms}
g_{21}\sbrack{2m+1}(\kappa) &:=  \sgn(\iu\kappa^2)\Big\< \delta_x,\ (\p-i\kappa^2)^{-\frac12}\Gamma \left(\Lambda\Gamma\right)^m(\p+i\kappa^2)^{-\frac12}\delta_x\Big\>,
\end{align}
with $g_{12}\sbrack{2m}(\kappa)=g_{21}\sbrack{2m}(\kappa):= 0$, and similarly, $\gamma\sbrack{2m+1}(\kappa):=0$ and
\begin{align}
 \vr\sbrack{2m}(\kappa) :=&  \sgn(\iu\kappa^2)\Big\< \delta_x,\ (\p+i\kappa^2)^{-\frac12}(\Lambda\Gamma)^m(\p+i\kappa^2)^{-\frac12}\delta_x\Big\> \notag \\
& 
- \sgn(\iu\kappa^2)\Big\< \delta_x,\  (\p-i\kappa^2)^{-\frac12}(\Gamma\Lambda)^m (\p-i\kappa^2)^{-\frac12}\delta_x\Big\>. \label{rho-Series terms}
\end{align}
then by  \eqref{Resolvent}, we have
\begin{equation}\label{g12-gamma-Series}
g_{12}(\kappa) = \sum_{\ell=1}^\infty g_{12}\sbrack{\ell}(\kappa),\quad g_{21}(\kappa) = \sum_{\ell=1}^\infty g_{21}\sbrack{\ell}(\kappa), \text{~and~} \gamma(\kappa) = \sum_{\ell=2}^\infty \gamma\sbrack{\ell}(\kappa) .
\end{equation}
We also write the tails of these series as
\begin{align*}
g_{12}\sbrack{\geq m}(\kappa) := &  \sum_{\ell=m}^\infty g_{12}\sbrack{\ell}(\kappa), \quad g_{21}\sbrack{\geq m}(\kappa) := \sum_{\ell=m}^\infty g_{21}\sbrack{\ell}(\kappa), \\
&  \gamma \sbrack{\geq m}(\kappa) := \sum_{\ell=m}^\infty \gamma \sbrack{\ell}(\kappa).
\end{align*}

By  \eqref{g12-ID} \eqref{g21-ID} and \eqref{QuadraticID},  we obtain  the identities
$$
g_{12} = - (2i \kappa^2+ \p)^{-1} [\kappa q+\kappa \vr q],\quad g_{21} = - (2i \kappa^2-\p)^{-1} [\kappa r+\kappa \vr r],
$$
and
$$
\gamma = -2g_{12} g_{21} -\tfrac12 \gamma^2,
$$
from which we have the explicit expressions for the leading order terms
\begin{gather}
g_{12}\sbrack{1}(\kappa) = - \tfrac {\kappa q}{2i \kappa^2 + \p},\qquad g_{12}\sbrack{3} (\kappa)= \tfrac2{2i\kappa^2 + \p}\big(\kappa q\cdot\tfrac {\kappa r}{2i\kappa^2 -  \p}\cdot \tfrac {\kappa q}{2i \kappa^2 + \p}\big), \label{g12 1 3}\\
g_{21}\sbrack{1}(\kappa) = - \tfrac {\kappa r}{2i \kappa^2 - \p},\qquad\hphantom{-} g_{21}\sbrack{3} (\kappa)= \tfrac{2}{2i\kappa^2 - \p}\big(\kappa r\cdot \tfrac {\kappa q}{2i\kappa^2 + \p}\cdot\tfrac {\kappa r}{2i\kappa^2 - \p}\big),\label{g21 1 3}
\end{gather}
and
\begin{align}
\vr\sbrack{2}(\kappa) &= - 2\,\tfrac {\kappa q}{2i\kappa^2 + \p}\cdot\tfrac{\kappa r} {2i\kappa^2 - \p}, \label{gamma 2}\\
\vr\sbrack{4}(\kappa) &=   \tfrac {\kappa q}{2i\kappa^2 +   \p}\cdot \tfrac 4{2i\kappa^2-  \p}\big(\kappa r\cdot\tfrac {\kappa q}{2i\kappa^2 + \p}\cdot \tfrac {\kappa r}{2i\kappa^2 - \p}\big) \notag \\
&\quad 
        + \tfrac4{2i\kappa^2 + \p}\big(\kappa q\cdot\tfrac {\kappa r}{2i \kappa^2 - \p}\cdot \tfrac {\kappa q}{2i\kappa^2 + \p}\big)\cdot \tfrac{\kappa r} {2i\kappa^2 - \p} \notag\\
&\quad  - 2\,\tfrac {\kappa q}{2i\kappa^2 + \p}\cdot\tfrac {\kappa r}{2i\kappa^2 - \p}\cdot \tfrac {\kappa q}{2i\kappa^2 + \p}\cdot\tfrac {\kappa r}{2i\kappa^2 - \p}. \label{gamma 4}
\end{align}

We   are now ready to obtain some  basic estimates on $g_{12}$, $g_{21}$. 

\begin{prop}[Properties of \(g_{12}\) and \(g_{21}\)]\label{prop:g}
There exists \(\delta>0\) such that for all $\iu \kappa^2\in \R$ with \(|\kappa|\geq 1\) the maps \(q\mapsto g_{12}(\kappa)\) and \(q\mapsto g_{21}(\kappa)\) are (real analytic) diffeomorphisms of \(\Bd\) into \(H^{s+\frac{1}2}\) satisfying the estimates
\begin{equation}\label{g12-Hs}
\|g_{12}(\kappa)\|_{H^{s+\frac{1}2}_\kappa} + \|g_{21}(\kappa)\|_{H^{s+\frac{1}2}_\kappa} \lesssim |\kappa|\, \|q\|_{H^{s-\frac12 }_\kappa}.
\end{equation}
Further, the remainders satisfy the estimate
\begin{equation}\label{g12-LO}
\|g_{12}\sbrack{\geq 3}(\kappa)\|_{H^{s+\frac{1}2}_\kappa} + \|g_{21}\sbrack{\geq 3}(\kappa)\|_{H^{s+\frac{1}2}_\kappa} \lesssim |\kappa|\|q\|_{H^{s-\frac{1}2}_\kappa}\|q\|^2_{L^2},
\end{equation}
uniformly in $\kappa$. Finally, if \(q\) is Schwartz then so are \(g_{12}(\kappa)\) and \(g_{21}(\kappa)\).
\end{prop}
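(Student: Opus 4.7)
The plan is to exploit the series representation \eqref{g12-gamma-Series} of \(g_{12}(\kappa)\) and \(g_{21}(\kappa)\), isolating the leading term and bounding the higher-order tail using additional smallness of \(\|q\|_{L^2}\). The leading term is given explicitly by \eqref{g12 1 3} as \(g_{12}\sbrack{1}(\kappa) = -\kappa q/(2\iu\kappa^2+\p)\); a direct Fourier computation — using that \(\iu\kappa^2\in\R\) gives \(|2\iu\kappa^2+\iu\xi|^2 = 4|\kappa|^4+\xi^2\) — yields \(\|g_{12}\sbrack{1}(\kappa)\|_{H^{s+\frac12}_\kappa}^2 = |\kappa|^2\|q\|_{H^{s-\frac12}_\kappa}^2\), which already realizes the target estimate \eqref{g12-Hs} at first order; the analogous identity for \(g_{21}\sbrack{1}\) follows by swapping \(q\) and \(r\).

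For each tail term \(g_{12}\sbrack{2m+1}(\kappa)\) with \(m\geq 1\), the plan is to dualize against \(f\in H^{-s-\frac12}_\kappa\) via \eqref{g12-Series terms} to obtain
\[ \<f, g_{12}\sbrack{2m+1}(\kappa)\> = \sgn(\iu\kappa^2)\,\tr\bigl\{f(\p+\iu\kappa^2)^{-\frac12}\Lambda(\Gamma\Lambda)^m(\p-\iu\kappa^2)^{-\frac12}\bigr\}, \]
then use cyclicity of the trace to rearrange the product into three pieces: (i) an \(f\)-block of the form \((\p-\iu\kappa^2)^{-s-\frac12}f(\p+\iu\kappa^2)^{-s-\frac12}\), with operator norm \(\lesssim|\kappa|^{-2s}\|f\|_{H^{-s-\frac12}_\kappa}\) by \eqref{BasicOpBound}; (ii) one distinguished copy of \(\kappa q\) combined with shifted resolvent weights into \((\p+\iu\kappa^2)^{s-\frac12}\kappa q(\p-\iu\kappa^2)^{-\frac12}\in\I_2\), with norm \(\lesssim |\kappa|\|q\|_{H^{s-\frac12}_\kappa}\) by \eqref{BasicBound}; and (iii) the remaining \(2m\) copies of \(\Lambda,\Gamma\), each in \(\I_2\) with norm \(\lesssim\|q\|_{L^2}\) via \eqref{LogarithmicBound}. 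Combining through the standard bounds \(|\tr(AB)|\leq\|A\|_{\op}\|B\|_{\I_1}\) and \(\I_2\cdot\I_2\subset\I_1\) gives \(|\<f, g_{12}\sbrack{2m+1}\>|\lesssim |\kappa|^{1-2s}\|f\|_{H^{-s-\frac12}_\kappa}\|q\|_{H^{s-\frac12}_\kappa}\|q\|_{L^2}^{2m}\), hence \(\|g_{12}\sbrack{2m+1}\|_{H^{s+\frac12}_\kappa}\lesssim |\kappa|\,\|q\|_{H^{s-\frac12}_\kappa}\|q\|_{L^2}^{2m}\) uniformly in \(\kappa\), since \(|\kappa|^{1-2s}\leq|\kappa|\) for \(|\kappa|\geq 1\) and \(s>0\). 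Summing the geometric series in \(\|q\|_{L^2}^2\leq\delta^2\) then delivers \eqref{g12-Hs}; restricting the sum to \(m\geq 1\) delivers \eqref{g12-LO}. The analogous argument applied to \eqref{g21-Series terms} handles \(g_{21}\).

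For the diffeomorphism claim, real analyticity is immediate from the absolute convergence of the power series in \(H^{s+\frac12}_\kappa\). The linearization of \(q\mapsto g_{12}(\kappa)\) at \(q=0\) is the Fourier multiplier \(-\kappa/(2\iu\kappa^2+\p)\), a bounded injection \(H^s\hookrightarrow H^{s+\frac12}\) with continuous left inverse \(-\kappa^{-1}(2\iu\kappa^2+\p)\) on its image; together with the tail bound \eqref{g12-LO}, which makes the nonlinear correction to the linearization \(O(\delta^2)\)-small, the inverse function theorem delivers a real-analytic diffeomorphism of \(\Bd\) onto its image in \(H^{s+\frac12}\), after possibly shrinking \(\delta\). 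For the Schwartz claim, applying the same estimates with \(s\) replaced by arbitrary \(s'>0\) (extending \eqref{BasicBound} and \eqref{BasicOpBound} in the obvious way) shows \(g_{12}(\kappa)\in H^{N}\) for every \(N\), which yields smoothness; rapid decay then propagates through the first-order ODE \eqref{g12-ID}, which for \(\iu\kappa^2>0\) solves explicitly as
\[ g_{12}(x;\kappa) = -\int_{-\infty}^x e^{-2\iu\kappa^2(x-y)}\kappa q(y)[\vr(y;\kappa)+1]\dy, \]
a convolution whose exponentially decaying kernel acts on a Schwartz integrand (using that \(\vr\) is likewise Schwartz, obtained analogously from \eqref{rho-Series terms}) and forces \(g_{12}\) to be Schwartz; the case \(\iu\kappa^2<0\) is symmetric.

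The main technical obstacle is the careful bookkeeping of Sobolev weights and \(\kappa\)-powers in the tail estimate. Each of the \(2m+1\) copies of \(q\) in \(g_{12}\sbrack{2m+1}\) must be assigned exactly one norm — one copy must absorb the regularity-aware \(\|q\|_{H^{s-\frac12}_\kappa}\), while the remaining \(2m\) copies must each contribute only \(\|q\|_{L^2}\) so that the geometric-series smallness is manifest — and the interleaved \(\kappa\)-powers from \eqref{BasicBound}, \eqref{BasicOpBound}, and \eqref{LogarithmicBound} must balance to give at most a single overall factor of \(|\kappa|\), independent of \(m\). Arranging this distribution by a single cyclic rearrangement that works uniformly for all \(m\geq 1\) and all admissible \(\kappa\) is the technical heart of the argument.
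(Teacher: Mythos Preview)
Your overall strategy matches the paper's: expand via the series \eqref{g12-gamma-Series}, compute the linear term directly, and bound the tail by duality using Lemma~\ref{L:BasicBounds}; the diffeomorphism and Schwartz arguments are likewise the same in spirit (the paper simply refers to \cite{KV:KdV:AnnMath,HKV:NLS} for the latter).

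There is, however, a real bookkeeping error in the tail estimate --- precisely the step you flag as ``the technical heart''. Your blocks (i)+(ii)+(iii) do not reproduce the cyclic trace. The $f$-block $(\p-\iu\kappa^2)^{-s-\frac12}\bar f(\p+\iu\kappa^2)^{-s-\frac12}$ carries an extra $(\cdot)^{-s}$ on \emph{each} side compared with the half-resolvents actually present; your block (ii) supplies the $(\p+\iu\kappa^2)^{+s}$ needed on the right of $\bar f$, but nothing supplies the $(\p-\iu\kappa^2)^{+s}$ needed on its left, since the last factor in the cycle is an unmodified $\Lambda$ ending in $(\p-\iu\kappa^2)^{-\frac12}$. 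Consequently the product $A\cdot B\cdot(\Gamma\Lambda)^m$ differs from the true trace by a stray $(\p-\iu\kappa^2)^{-s}$, and your intermediate bound $|\kappa|^{1-2s}\|f\|_{H^{-s-\frac12}_\kappa}\|q\|_{H^{s-\frac12}_\kappa}\|q\|_{L^2}^{2m}$ is not obtainable from this decomposition. The fix is small: also shift the weight on the factor adjacent to $\bar f$ on the other side (replace the last $\Lambda$ by $(\p+\iu\kappa^2)^{-\frac12}\kappa q(\p-\iu\kappa^2)^{s-\frac12}$, whose $\I_2$ norm is $\lesssim|\kappa|\|q\|_{H^{s-\frac12}_\kappa}$ by the adjoint of \eqref{BasicBound}), giving $|\kappa|^{2-2s}\|q\|_{H^{s-\frac12}_\kappa}^2\|q\|_{L^2}^{2m-1}$; then $|\kappa|^{1-2s}\|q\|_{H^{s-\frac12}_\kappa}\lesssim\|q\|_{L^2}$ recovers \eqref{g12-LO}. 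The paper sidesteps the issue by applying \eqref{BasicBound} to \emph{all} $2\ell+1$ factors, collecting $|\kappa|^{-2s(2\ell-1)}$ from the leftover $(\cdot)^{-s}$ pieces and using $\bigl(|\kappa|^{1-2s}\|q\|_{H^{s-\frac12}_\kappa}\bigr)^{2\ell}\lesssim\|q\|_{L^2}^{2\ell}$ only at the end.
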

\begin{proof}
It suffices to consider the case \(\iu \kappa^2\geq 1\) as the case \(\iu \kappa^2\leq -1\) is similar, and  by \eqref{grho-symmetries}, it suffices to consider $g_{12}(\kappa)$. Recalling \eqref{g12 1 3}, we obtain
\begin{equation}\label{linear isomorphism}
\|g_{12}\sbrack{1}(\kappa)\|_{H^{s+\frac{1}2}_\kappa} = |\kappa|\, \|q\|_{H^{s-\frac 12 }_\kappa}.
\end{equation}
To bound the remainder terms in the series\footnote{If we use $\I_p$ with $p>2$ instead of $\I_2$ once again, we may pay some regularity on $s>s_0>0$ to remove the small mass assumption.}, we employ duality and Lemma~\ref{L:BasicBounds}:
\begin{align}\label{trilinear est}
\bigl|\<f, g_{12}\sbrack{\geq 3}(\kappa)\>\bigr|
&\leq \|(\p-i\kappa^2 )^{-s-\frac{1}2}\bar f(\p + i\kappa^2 )^{-s-\frac{1}2}\|_{\op} \notag\\
&\qquad\qquad \times\sum_{\ell=1}^\infty\|(\p+i\kappa^2 )^{s-\frac12}kq(\p-i\kappa^2 )^{-\frac12}\|_{\I_2}^{2\ell+1}|\kappa|^{-2s(2\ell-1)} \notag\\
&\lesssim |\kappa|^{-2s} \|f\|_{H^{-s-\frac{1}2}_\kappa}\sum_{\ell=1}^\infty \left(|\kappa| \|q\|_{H_\kappa^{s-\frac{1}2}}  \right)^{2\ell+1} |\kappa|^{-2s(2\ell-1)} \notag\\
&\lesssim\|f\|_{H^{-s-\frac{1}2}_\kappa} |\kappa|\, \|q\|_{H_\kappa^{s-\frac{1}2}}\sum_{\ell=1}^\infty \left(|\kappa|^{1-2s} \|q\|_{H_\kappa^{s-\frac{1}2}}  \right)^{2\ell}  \notag\\
&\lesssim\|f\|_{H^{-s-\frac{1}2}_\kappa} |\kappa|\, \|q\|_{H_\kappa^{s-\frac{1}2}} \|q\|^{2}_{L^2}  
\end{align}
provided $\delta>0$ is sufficiently small.  This proves \eqref{g12-LO} and completes the proof of \eqref{g12-Hs}.

In addtion, we  have
\begin{equation*}
\tfrac{\delta g_{12}}{\delta q}(\kappa)\bigr|_{q=0} = - \frac{\kappa}{ 2i\kappa^2 + \p} \qtq{and}  \tfrac{\delta g_{12}}{\delta r}(\kappa)\bigr|_{q=0} = 0
\end{equation*}
which is an isomorphism, as noted already in \eqref{linear isomorphism}.  Furthermore, for any  $f\in \Schwartz$, we have
\begin{align*}
\left.\frac d{d\epsilon}\right|_{\epsilon = 0} G(x,z;q + \epsilon f) &= -\int G(x,y;q)\begin{bmatrix}0&f(y)\\ - \bar f(y)&0\end{bmatrix}G(y,z;q)\,dy.
\end{align*}
By similar analysis as those to prove \eqref{g12-LO}, we have
\begin{align*}
& \bigl\| \tfrac{\delta g_{12}}{\delta r}(\kappa) \bigr\|_{H^{s-\frac 12}_\kappa\rightarrow H^{s+\frac{1}2}_\kappa} + \bigl\| \tfrac{\delta g_{12}}{\delta q}(\kappa)+ \frac{\kappa}{ 2i\kappa^2 + \p}  \bigr\|_{H^{s-\frac 12}_\kappa\rightarrow H^{s+\frac{1}2}_\kappa} 
\lesssim   \|q\|^2_{L^2} 
  \lesssim       \delta^2,
\end{align*}
and so the inverse function theorem implies the diffeomorphism property for sufficiently small $\delta$ .

The regulairty result can be easily obtained in a similar argument as those in  \cite[Proposition $2.2$]{KV:KdV:AnnMath} and \cite[Proposition~3.2]{HKV:NLS}.
\end{proof}

We  also have some  estimates on $\gamma$ as follows.
\begin{prop}[Properties of \(\vr\)]\label{prop:rho}
There exists \(\delta>0\) such that for all $\iu \kappa^2\in \R$ with  \(|\kappa|\geq 1\) the map \(q\mapsto \vr(\kappa)\) is bounded from \(\Bd\) to \(L^1\cap H^{s+\frac{1}2}\) and we have the estimates
\begin{align}
\|\vr(\kappa)\|_{H^{s+\frac{1}2}_\kappa} &\lesssim |\kappa|^{2-2s}\|q\|_{H^{s-\frac{1}2}_\kappa}^2,\label{rho-Hs}\\
\|\vr(\kappa)\|_{L^\infty} &\lesssim |\kappa|^{2-4s}\|q\|_{H^{s-\frac 12}_\kappa}^2,\label{rho-Linfty}\\
\|\vr(\kappa)\|_{L^1} &\lesssim |\kappa |^{2} \|q\|_{H^{-1}_\kappa}^2 + |\kappa|^{-2(4s-1)}\|q\|_{H^{s-\frac 12}_\kappa}^4,\label{rho-L1}\\
\|\vr\sbrack{\geq 4}(\kappa)\|_{L^1}&\lesssim |\kappa|^{-2(4s-1) }\|q\|_{H^{s-\frac 12}_\kappa}^4,\label{rho-LO}
\end{align}
uniformly in \(\kappa\).
Further, we have the quadratic identity
\begin{equation}
\label{QuadraticID}
 \vr + \frac12 \vr^2 = -2 g_{12}g_{21},
\end{equation}
and if \(q\) is Schwartz, then so is \(\vr(\kappa)\).
\end{prop}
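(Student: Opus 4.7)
\emph{Quadratic identity first.} The heart of the proposition is \eqref{QuadraticID}; every other estimate will be reduced to the bounds on $g_{12},g_{21}$ already supplied by Proposition~\ref{prop:g}. I work first with $q\in\BdS$, where $\vr(\kappa)$, $g_{12}(\kappa)$, $g_{21}(\kappa)$ are Schwartz and the ODE system \eqref{rho-ID}--\eqref{g21-ID} applies pointwise. A direct computation using these three identities yields
\[
(g_{12}g_{21})'=-\kappa(q g_{21}-r g_{12})(\vr+1)=-\tfrac12\vr'(\vr+1)=-\tfrac12\bigl(\vr+\tfrac12\vr^2\bigr)',
\]
so $g_{12}g_{21}+\tfrac12\vr+\tfrac14\vr^2$ is a constant in $x$; since all three quantities decay at infinity, the constant is zero, giving \eqref{QuadraticID}. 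The identity extends from $\BdS$ to $\Bd$ by the density of Schwartz functions and the continuity statements in Propositions~\ref{P:R} and~\ref{prop:g}.

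\emph{Sobolev and $L^\infty$ estimates.} From \eqref{QuadraticID} I write $\vr=-2g_{12}g_{21}-\tfrac12\vr^2$. The algebra property \eqref{E:algebra} combined with \eqref{g12-Hs} gives
\[
\|g_{12}g_{21}\|_{H^{s+1/2}_\kappa}\lesssim |\kappa|^{-2s}\|g_{12}\|_{H^{s+1/2}_\kappa}\|g_{21}\|_{H^{s+1/2}_\kappa}\lesssim |\kappa|^{2-2s}\|q\|_{H^{s-1/2}_\kappa}^{2},
\]
while \eqref{E:algebra} applied to $\vr^{2}$ together with a continuity argument in the parameter $\delta$ absorbs the quadratic term on the right, yielding \eqref{rho-Hs}. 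The $L^\infty$ bound \eqref{rho-Linfty} is then immediate from \eqref{Linfty bdd} applied to \eqref{rho-Hs}. The Schwartz property of $\vr(\kappa)$ when $q\in\Schwartz$ also follows, since the right-hand side of \eqref{QuadraticID} is Schwartz by the corresponding statement in Proposition~\ref{prop:g}, and \eqref{QuadraticID} can be solved for $\vr$ in a neighborhood of $0$ via the implicit function theorem.

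\emph{The $L^1$ bounds.} I split $\vr=\vr\sbrack{2}+\vr\sbrack{\geq 4}$ using \eqref{g12-gamma-Series}. The quadratic piece is handled directly from \eqref{gamma 2}: by Cauchy--Schwarz and Plancherel,
\[
\|\vr\sbrack{2}\|_{L^{1}}\leq 2\Bigl\|\tfrac{\kappa q}{2i\kappa^{2}+\p}\Bigr\|_{L^{2}}\Bigl\|\tfrac{\kappa r}{2i\kappa^{2}-\p}\Bigr\|_{L^{2}}\lesssim|\kappa|^{2}\|q\|_{H^{-1}_\kappa}^{2},
\]
since the Fourier multiplier of $(2i\kappa^{2}\pm\p)^{-1}$ has modulus $(4|\kappa|^{4}+\xi^{2})^{-1/2}$. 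Combined with \eqref{rho-LO} this delivers \eqref{rho-L1}.

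\emph{The main obstacle: the tail \eqref{rho-LO}.} For each $m\geq 2$, the function $\vr\sbrack{2m}(\cdot;\kappa)$ given by \eqref{rho-Series terms} is the integral-kernel diagonal of an operator $A_{m}(\kappa)$ of the form $\sgn(i\kappa^{2})\bigl[(\p+i\kappa^{2})^{-1/2}(\Lambda\Gamma)^{m}(\p+i\kappa^{2})^{-1/2}-(\p-i\kappa^{2})^{-1/2}(\Gamma\Lambda)^{m}(\p-i\kappa^{2})^{-1/2}\bigr]$. Testing against $\phi\in L^{\infty}$ with $\|\phi\|_{L^{\infty}}\leq 1$ gives
\[
\Bigl|\int \phi(x)\,\vr\sbrack{2m}(x;\kappa)\,dx\Bigr|=|\tr(M_{\phi}A_{m}(\kappa))|,
\]
so $\|\vr\sbrack{\geq 4}\|_{L^{1}}$ is controlled by $\sum_{m\geq 2}\|A_{m}(\kappa)\|_{\I_{1}}$ after absorbing $M_{\phi}$ into the trace by cyclicity. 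The hardest accounting is to distribute the $2m$ factors of $\Lambda,\Gamma$ so that four of them carry the refined Hilbert--Schmidt bound \eqref{BasicBound} (each contributing $|\kappa|\|q\|_{H^{s-1/2}_\kappa}$ at the price of an $(\p\pm i\kappa^{2})^{-s}$ piece costing $|\kappa|^{-2s}$) and the remaining $2m-4$ factors carry the cruder bound \eqref{LogarithmicBound} (each contributing $\|q\|_{L^{2}}\lesssim\delta$). Absorbing the outer resolvent half-powers and the multiplier $M_{\phi}$ via \eqref{BasicOpBound}, one obtains
\[
|\tr(M_{\phi}A_{m}(\kappa))|\lesssim|\kappa|^{2-8s}\|q\|_{H^{s-1/2}_\kappa}^{4}\bigl(C\|q\|_{L^{2}}^{2}\bigr)^{m-2},
\]
and the geometric series converges for $\delta$ small, proving \eqref{rho-LO}. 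This Schatten bookkeeping is the only genuinely delicate step; once it is in place, the remaining estimates follow mechanically from the machinery of Section~\ref{S:2} and Proposition~\ref{prop:g}.
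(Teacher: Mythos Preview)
Your argument is essentially correct, but it follows a different route from the paper's, and there is one misattribution worth flagging.

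\textbf{Comparison with the paper.} The paper reverses your order: it first proves \eqref{rho-Hs} directly from the series \eqref{rho-Series terms}, by the same duality/Schatten argument used for \eqref{g12-LO} in Proposition~\ref{prop:g} (estimate $\gamma\sbrack{2}$ via \eqref{gamma 2} and the algebra property, then handle $\gamma\sbrack{\geq 4}$ by testing against $f\in H^{-s-1/2}_\kappa$). Only afterwards does it derive the quadratic identity, by specializing \eqref{micro As commute} to $\vk=\kappa$ and integrating. Your derivation of \eqref{QuadraticID} straight from \eqref{rho-ID}--\eqref{g21-ID} is equivalent and perfectly fine; your bootstrap for \eqref{rho-Hs} from the identity also works, though it trades a clean series estimate for a continuity argument that needs a word about why $\|\vr\|_{H^{s+1/2}_\kappa}$ is finite and continuous along the path $\theta\mapsto\theta q$ (for $q\in\BdS$ this is clear since $\vr$ is Schwartz).

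The more substantial divergence is in \eqref{rho-LO}. The paper does \emph{not} return to the operator series; instead it uses the quadratic identity to write
\[
\vr\sbrack{\geq 4}=-\tfrac12\vr^2-2g_{12}\sbrack{\geq 3}\,g_{21}-2g_{12}\sbrack{1}\,g_{21}\sbrack{\geq 3},
\]
and then simply applies H\"older in $L^1$ together with the $L^2$ bounds $\|g_{12}\|_{L^2},\|g_{21}\|_{L^2},\|\vr\|_{L^2}\lesssim|\kappa|^{-2s-1}\|\cdot\|_{H^{s+1/2}_\kappa}$ coming from Proposition~\ref{prop:g} and \eqref{rho-Hs}. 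This is considerably shorter than your Schatten bookkeeping and avoids the trace--diagonal machinery entirely. Your approach has the merit of being self-contained (it does not feed \eqref{rho-Hs} back into the $L^1$ estimate), but the paper's is more economical.

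\textbf{A minor correction.} In your tail estimate you invoke \eqref{BasicOpBound} to absorb $M_\phi$ together with the outer resolvent half-powers. That lemma bounds $(\p+i\kappa^2)^{-s-1/2}f(\p-i\kappa^2)^{-s-1/2}$ in operator norm by $|\kappa|^{-2s}\|f\|_{H^{-s-1/2}_\kappa}$, which is not available for a generic $\phi\in L^\infty$. What you actually need (and what your power count $|\kappa|^{2-8s}$ reflects) is just $\|M_\phi\|_{\op}\leq 1$ together with $\|(\p\pm i\kappa^2)^{-1/2}\|_{\op}\leq|\kappa|^{-1}$; the four refined $\I_2$ factors then each contribute $|\kappa|^{1-2s}\|q\|_{H^{s-1/2}_\kappa}$ via $\Lambda=(\p+i\kappa^2)^{-s}\bigl[(\p+i\kappa^2)^{s-1/2}\kappa q(\p-i\kappa^2)^{-1/2}\bigr]$, and the arithmetic closes as you wrote. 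So the estimate is right, only the citation is off.
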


\begin{proof}
Once again it suffices to consider the case \(\iu \kappa^2\geq1\). Using \eqref{gamma 2} and \eqref{E:algebra}, we obtain
\begin{align*}
\|\gamma\sbrack{2}\|_{H_\kappa^{s+\frac12}}\lesssim \|\tfrac {\kappa q}{2i\kappa^2 + \p}\cdot\tfrac{\kappa r} {2i\kappa^2 - \p} \|_{H_\kappa^{s+\frac12}}\lesssim |\kappa|^{2-2s}\| q\|_{H^{s-\frac12}_\kappa}^2.
\end{align*}
To handle $\gamma\sbrack{\geq 4}$ we use the series representation \eqref{g12-gamma-Series} and the same dual argument used to prove \eqref{g12-LO}. The estimate \eqref{rho-Linfty} then follows from \eqref{rho-Hs} via~\eqref{Linfty bdd}.

Choosing $\vk=\kappa$ in \eqref{micro As commute}, we obtain
$$
\partial_x \bigl\{ 2 g_{12}(x;\kappa)g_{21}(x;\kappa) + \tfrac12\vr(x;\kappa)^2 + \vr(x;\kappa) \bigr\} = 0.
$$
By \eqref{g12-Hs} and \eqref{rho-Hs}, we know that the term in the braces vanishes as $|x|\to\infty$. Thus the identity \eqref{QuadraticID} follows by integration.

By using this quadratic identity, we may write
\begin{equation}\label{E:vr gr 4}
\vr\sbrack{\geq 4} = - \tfrac12 \vr^2 - 2g_{12}\sbrack{\geq 3}\cdot g_{21} - 2g_{12}\sbrack{1}\cdot g_{21}\sbrack{\geq 3}.
\end{equation}
By Proposition~\ref{prop:g} and  \eqref{rho-Hs}, we have
\begin{align*}
\|g_{12}\sbrack{\geq 3}\|_{L^2}+\|g_{21}\sbrack{\geq 3}\|_{L^2}&\lesssim |\kappa|^{-2(s+\frac12)}\bigl[\|g_{12}\sbrack{\geq 3}\|_{H^{s+\frac12}_\kappa}+ \|g_{21}\sbrack{\geq 3}\|_{H^{s+\frac12}_\kappa}\bigr] \lesssim |\kappa|^{-6s+2}\|q\|_{H^{s-\frac12 }_\kappa}^3,\\
\|g_{12}\|_{L^2}+\|g_{21}\|_{L^2}&\lesssim |\kappa|^{-2(s+\frac12)} \bigl( \|g_{12}\|_{H_\kappa^{s+\frac12}}+\|g_{21}\|_{H_\kappa^{s+\frac12}}\bigr)\lesssim   |\kappa|^{-2s}\|q\|_{H^{s-\frac12}_\kappa}, 
\end{align*} and
\begin{align*}
\|\vr\|_{L^2}&\lesssim |\kappa|^{-2(s+\frac12)}\|\vr\|_{H^{s+\frac12}_\kappa}\lesssim|\kappa|^{-4s+1}\|q\|_{H^{s-\frac12}_\kappa}^2,
\end{align*}
which together with H\"older's inequality imply that
\begin{align*}
\|\vr\sbrack{\geq 4}\|_{L^1} &\lesssim \|\vr\|_{L^2}^2 + \|g_{12}\sbrack{\geq 3}\|_{L^2}\|g_{21}\|_{L^2} + \|g_{12}\sbrack{1}\|_{L^2}\|g_{21}\sbrack{\geq 3}\|_{L^2}\lesssim |\kappa|^{-2(4s-1)}\|q\|_{H^{s-\frac12}_\kappa}^4,
\end{align*}
thus we obtain \eqref{rho-LO}. The estimate \eqref{rho-L1} then follows from applying the Cauchy-Schwarz inequality to \eqref{gamma 2}. 

The regulairty result can also be deduced by a similar argument as those in \cite[Proposition $2.2$]{KV:KdV:AnnMath} and \cite[Proposition~3.3]{HKV:NLS}.
\end{proof}

Due to the structure of microscopic conservation law in \eqref{micro A},  the combination function \(\frac{g_{12}(\kappa)}{2 + \vr(\kappa)}\) will be also used later. We now give the analogue estimates. Firstly, we denote:
\begin{equation*}
\tfrac{g_{12}}{2 + \vr} = \big(\tfrac{g_{12}}{2 + \vr}\big)\sbrack{1} + \big(\tfrac{g_{12}}{2 + \vr}\big)\sbrack{ 3} + \big(\tfrac{g_{12}}{2 + \vr}\big)\sbrack{\geq 5},
\end{equation*}
where the leading order terms are given by
\begin{equation}\label{more sbrack}
\big(\tfrac{g_{12}}{2 + \vr}\big)\sbrack{1} = \tfrac12 g_{12}\sbrack{1} \qtq{and} \big(\tfrac{g_{12}}{2 + \vr}\big)\sbrack{3} = \tfrac12 g_{12}\sbrack{3} - \tfrac14g_{12}\sbrack{1}\vr\sbrack{2},
\end{equation}
and the remainder term is given by
\begin{align}
\big(\tfrac{g_{12}}{2 + \vr}\big)\sbrack{\geq 3} &= \tfrac12 g_{12}\sbrack{\geq 3} - \tfrac{g_{12} \vr}{2(2 + \vr)}. \label{more sbrack'}
\end{align}

We can now show the following estimates about $\tfrac{g_{12}(\kappa)}{2 + \vr(\kappa)}$.

\begin{cor}\label{C:ET}
Let $s\in (0,1/2)$ and $q\in B_\delta$.  there exists \(\delta>0\) such that for all $\iu \kappa^2\in \R$ with  $|\kappa|\geq 1$, we have the estimates
\begin{align}
|\kappa|^2\bigl\|\tfrac{g_{12}(\kappa)}{2 + \vr(\kappa)}\bigr\|_{H^{s-1/2}} + \bigl\| \tfrac{g_{12}(\kappa)}{2 + \vr(\kappa)}\bigr\|_{H^{s+1/2}}
	&\lesssim |\kappa| \|q\|_{H^{s-1/2}},\label{ET Sob} \\
|\kappa|^2\bigl\|\big(\tfrac{g_{12}(\kappa)}{2 + \vr(\kappa)}\big)\sbrack{\geq 3}\bigr\|_{H^{s-1/2}} + \bigl\| \big(\tfrac{g_{12}(\kappa)}{2 + \vr(\kappa)}\big)\sbrack{\geq 3}\bigr\|_{ H^{s+1/2}}
    &\lesssim   |\kappa| \|q\|_{H^{s-1/2}}\|q\|_{L^2}^2. \label{ET1 Sob}
\end{align}
\end{cor}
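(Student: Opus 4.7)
The plan is to reduce \eqref{ET Sob} and \eqref{ET1 Sob} to bounds already furnished by Propositions \ref{prop:g} and \ref{prop:rho}, by combining the decomposition \eqref{more sbrack'}, the algebra \eqref{E:algebra}, the multiplier estimate \eqref{multiplier bdd on ss}, and a Neumann expansion of $(2+\vr)^{-1}$. The main subtlety will be the $|\kappa|^2\|\cdot\|_{H^{s-1/2}}$ control on $g_{12}^{[\geq 3]}$: the duality bound from the proof of Proposition \ref{prop:g} naturally produces $\|f\|_{H^{-s-1/2}_\kappa}$-paired estimates that are too weak once multiplied by $|\kappa|^2$, so the missing decay has to be recovered directly from the $(2i\kappa^2+\p)^{-1}$ structure of $g_{12}$ itself.

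As a preliminary I would establish the auxiliary bound $\|\vr/(2+\vr)\|_{H^{s+1/2}_\kappa}\lesssim\|\vr\|_{H^{s+1/2}_\kappa}$. Maximizing the weight at $\xi=0$ gives $\|q\|_{H^{s-1/2}_\kappa}\lesssim|\kappa|^{2s-1}\|q\|_{L^2}$, which combined with \eqref{rho-Hs} yields $\|\vr\|_{H^{s+1/2}_\kappa}\lesssim|\kappa|^{2s}\|q\|_{L^2}^2$, so that the quantity $|\kappa|^{-2s}\|\vr\|_{H^{s+1/2}_\kappa}\lesssim\|q\|_{L^2}^2\lesssim\delta^2$ is small. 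Iterating \eqref{E:algebra} along the geometric series $\frac{\vr}{2+\vr}=\frac{\vr}{2}\sum_{k\geq 0}(-\vr/2)^k$ then gives the claim.

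For \eqref{ET1 Sob} I would apply the decomposition \eqref{more sbrack'}. The $H^{s+1/2}$ norm of $g_{12}^{[\geq 3]}$ is handled directly by \eqref{g12-LO}, and that of $g_{12}\cdot(\vr/(2+\vr))$ follows from \eqref{g12-Hs} via two applications of \eqref{E:algebra} together with the preliminary bound; the passage from $H^{s+1/2}_\kappa$ to $H^{s+1/2}$ uses the weight monotonicity $\|f\|_{H^{s+1/2}}\leq\|f\|_{H^{s+1/2}_\kappa}$. For the $H^{s-1/2}$ part I would exploit the identity derived from \eqref{g12-ID} and \eqref{g12 1 3},
\[
g_{12}^{[\geq 3]} \;=\; g_{12}-g_{12}^{[1]} \;=\; -\frac{\kappa q\vr}{2i\kappa^2+\p},
\]
which, by direct Fourier computation using $|2\kappa^2+\xi|^2=4|\kappa|^4+\xi^2$ and $|\kappa|^6/(4|\kappa|^4+\xi^2)\leq|\kappa|^2/4$, yields $|\kappa|^2\|g_{12}^{[\geq 3]}\|_{H^{s-1/2}}\lesssim|\kappa|\,\|q\vr\|_{H^{s-1/2}}$. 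The multiplier estimate \eqref{multiplier bdd on ss} combined with the $\vr$-bound above then produces $\|q\vr\|_{H^{s-1/2}}\lesssim\|q\|_{H^{s-1/2}}\|q\|_{L^2}^2$. The $H^{s-1/2}$ norm of $g_{12}\cdot(\vr/(2+\vr))$ is handled analogously: the same Fourier argument applied to the full $g_{12}=-\kappa q(\vr+1)/(2i\kappa^2+\p)$ supplies $\|g_{12}\|_{H^{s-1/2}}\lesssim|\kappa|^{-1}\|q\|_{H^{s-1/2}}$, after which a single use of \eqref{multiplier bdd on ss} with $g_{12}$ in the ``rough'' slot and $\vr/(2+\vr)$ in the ``smooth'' slot closes the estimate.

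Finally, \eqref{ET Sob} follows by splitting $\frac{g_{12}}{2+\vr}=\frac{1}{2}g_{12}^{[1]}+(\frac{g_{12}}{2+\vr})^{[\geq 3]}$: the remainder is controlled by \eqref{ET1 Sob}, whose extra $\|q\|_{L^2}^2\lesssim\delta^2$ factor absorbs into the target bound; and for the leading term $g_{12}^{[1]}=-\kappa q/(2i\kappa^2+\p)$ the same Fourier computation, now using $|\kappa|^4\leq 4|\kappa|^4+\xi^2$ and $(4+\xi^2)\leq(4|\kappa|^4+\xi^2)$ for $|\kappa|\geq 1$, yields both $|\kappa|^2\|g_{12}^{[1]}\|_{H^{s-1/2}}\lesssim|\kappa|\,\|q\|_{H^{s-1/2}}$ and $\|g_{12}^{[1]}\|_{H^{s+1/2}}\lesssim|\kappa|\,\|q\|_{H^{s-1/2}}$.
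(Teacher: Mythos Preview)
Your argument is correct and rests on the same mechanism as the paper's proof---namely, using the ODE \eqref{g12-ID} to extract the $(2i\kappa^2+\p)^{-1}$ structure that supplies the missing $|\kappa|^{-2}$ decay---but the packaging differs. The paper does not split via \eqref{more sbrack'}; instead it derives a single identity
\[
(2i\kappa^2+\p)\Bigl(\tfrac{g_{12}}{2+\vr}\Bigr)^{[\geq 3]} \;=\; -\kappa\,\tfrac{\vr}{2(2+\vr)}\,q \;-\; \tfrac{g_{12}}{(2+\vr)^2}\,\vr'
\]
(obtained by applying $(2i\kappa^2+\p)$ to $g_{12}/(2+\vr)$ using \eqref{g12-ID} and subtracting the $[1]$-part), observes that $|\kappa|^2\|f\|_{H^{s-1/2}}+\|f\|_{H^{s+1/2}}\approx\|(2i\kappa^2+\p)f\|_{H^{s-1/2}}$, and then estimates the right-hand side in $H^{s-1/2}$ in one stroke via \eqref{multiplier bdd on ss}. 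This handles both norm pieces simultaneously and avoids treating $g_{12}^{[\geq 3]}$ and $g_{12}\vr/(2+\vr)$ separately. Your route is slightly longer but more transparent about where each contribution comes from; the paper's is more economical. Neither approach requires ingredients the other lacks.
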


\begin{proof}
From \eqref{more sbrack} and \eqref{g12 1 3}, we see that
$$
|\kappa|^2\bigl\|\big(\tfrac{g_{12}}{2 + \vr}\big)\sbrack{1}\bigr\|_{H^{s-1/2}} + \bigl\| \big(\tfrac{g_{12}}{2 + \vr}\big)\sbrack{1}\bigr\|_{ H^{s+1/2}}
\approx \bigl\| (2i\kappa^2+\p) \big(\tfrac{g_{12}(\vk)}{2 + \vr(\vk)}\big)\sbrack{1}\bigr\|_{ H^s}
\approx|\kappa| \|q\|_{H^{s-1/2}}.
$$
Thus  it suffice for \eqref{ET Sob} to show \eqref{ET1 Sob}.  Moreover,  by \eqref{g12-ID}, we have
$$
(2i\kappa^2+\p) \big(\tfrac{g_{12}}{2 + \vr}\big)\sbrack{\geq 3} = - \kappa \tfrac{\vr}{2(2+\vr)} q + \tfrac{g_{12}}{(2 + \vr)^2} \gamma', 
$$
and therefore, we have 
\begin{align*}
\text{LHS of \eqref{ET1 Sob}} &\lesssim |\kappa| \bigl\| \tfrac{\vr}{2+\vr} q \bigr\|_{H^{s-1/2}}  + \bigl\| \tfrac{g_{12}}{(2 + \vr)^2} \gamma' \bigr\|_{H^{s-1/2}} \\
&\lesssim |\kappa|^{1-2s} \|q\|_{H^{s-1/2}} \bigl\| \tfrac{\vr}{2+\vr} \bigr\|_{ H^{s+1/2}_\kappa}
		+ |\kappa|^{-2s} \|\vr'\|_{H^{s-1/2}}  \bigl\| \tfrac{g_{12}}{(2 + \vr)^2} \bigr\|_{ H^{s+1/2}_\kappa}\\
&\lesssim |\kappa|^{1-2s} \|q\|_{H^{s-1/2}} \bigl\|\vr \bigr\|_{ H^{s+1/2}_\kappa}		\\
&\lesssim |\kappa| \|q\|_{H^{s-1/2}}  \|q\|^2_{L^2},
\end{align*}
where the second step we use \eqref{multiplier bdd on ss} and \eqref{rho-Hs}, and the third step we  expand as series and employ the algebra property \eqref{E:algebra}, together with \eqref{g12-Hs} and \eqref{rho-Hs}.  This yields \eqref{ET Sob} for sufficiently small $\delta$.
\end{proof}

\section{Conservation laws and dynamics}\label{S:4}
In this section, we will firstly introduce the invariant quantity $A(\kappa)$ from the logrithmic perturbation determinant, which is related to the integrability and spectral invariance of \eqref{DNLS}, then we show the dynamics and microscopic conservation laws  of the $A(\kappa)$'s flow and the DNLS (i.e. $H_{\DNLS}$) flow, respectively. 

\subsection{Conservation Laws}
Inspired by \cite{HKV:NLS, KV:KdV:AnnMath,KVZ:KdV:GAFA, Rybkin:KdV:Cons Law}, we formally define the logarithmic perturbation determinant $\sgn(\iu\kappa^2)\log\det(L_0^{-1}L)$   as follows:
\begin{equation}\label{A-def}
A(\kappa; q,r) := \sgn(\iu\kappa^2)\sum_{j=1}^\infty\frac{(-1)^{j-1}}j \tr\left\{\left(\sqrt{R_0}\left(L - L_0\right)\sqrt{R_0}\right)^j\right\}.
\end{equation}
 By \eqref{LambdaGammaJob}, simple calculations deduce  that for $q,r\in \Schwartz$, we have
\begin{equation}\label{A-def'}
A(\kappa; q,r) = - \sgn(\iu\kappa^2)\sum_{m=1}^\infty \tfrac{1}{m} \tr\left\{(\Lambda\Gamma)^m \right\}.
\end{equation}
In the following, we will use \eqref{A-def'} as the definition of $A(\kappa; q,r)$.
For the sake of simplicity, we  write
\begin{equation}\label{Am to A}
	A(\kappa; q,r) = \sum\limits_{m=1}^{\infty} A_m(\kappa; q,r),  \; A_m(\kappa; q,r) :=-\sgn(\iu\kappa^2)\tfrac{1}{m} \tr\left\{(\Lambda\Gamma)^m \right\}.
\end{equation}

Firstly,  we have
\begin{lem}[Properties of $A$]\label{L:A}
	There exists $\delta>0$ such that for all $q\in \Bd \cap \Schwartz$ and $i \kappa^2\in \R$ with $|\kappa|\geq 1$, the series \eqref{A-def'}  converges absolutely. Moreover, we have
	\begin{gather}
	\tfrac{\delta\,}{\delta q} A(\kappa) =-\kappa g_{21}(\kappa),
	\quad \tfrac{\delta\,}{\delta r}A(\kappa) = -\kappa g_{12}(\kappa), \label{gij from A}\\
	\; \vr'(\kappa) = 2\left(-q\tfrac{\delta\,}{\delta q}A(\kappa) + r\tfrac{\delta\,}{\delta r}A(\kappa) \right).
	\label{rho from A}
	\end{gather}
\end{lem}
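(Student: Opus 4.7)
The plan is to verify the three claims in order, with the bulk of the work going into \eqref{gij from A}.

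For absolute convergence, I factor $(\Lambda\Gamma)^m = \Lambda(\Gamma\Lambda)^{m-1}\Gamma$ and combine the Schatten ideal inequality $\|ABC\|_{\I_1}\leq \|A\|_{\I_2}\|B\|_{\op}\|C\|_{\I_2}$ with the submultiplicativity of the operator norm and the basic bound \eqref{Lambda} to obtain
\[
|A_m(\kappa;q,r)|\leq \tfrac{1}{m}\|(\Lambda\Gamma)^m\|_{\I_1}\lesssim \tfrac{1}{m}\bigl(\|\Lambda\|_{\I_2}\|\Gamma\|_{\I_2}\bigr)^m\lesssim \tfrac{1}{m}\delta^{2m}
\]
uniformly for $\iu\kappa^2\in\R$ with $|\kappa|\geq 1$. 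For $\delta$ sufficiently small this forces absolute convergence of \eqref{A-def'}, and the same estimate underwrites all the termwise manipulations that follow.

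For \eqref{gij from A}, I will vary $q\mapsto q+\theta f$ while keeping $r$ fixed; this replaces $\Lambda$ by $\Lambda+\theta\,(\delta\Lambda)$, where $\delta\Lambda:=(\p+\iu\kappa^2)^{-1/2}\kappa f(\p-\iu\kappa^2)^{-1/2}$, and leaves $\Gamma$ unchanged. The product rule produces $m$ contributions to $\tfrac{d\ }{d\theta}\big|_{\theta=0}\tr((\Lambda\Gamma)^m)$, all equal after a cyclic shift, so that the factor $1/m$ appearing in $A_m$ is cancelled and
\[
\tfrac{d\ }{d\theta}\Big|_{\theta=0} A_m = -\sgn(\iu\kappa^2)\,\tr\bigl((\delta\Lambda)\,\Gamma(\Lambda\Gamma)^{m-1}\bigr).
\]
Summing in $m$, using cyclicity once more to bring the factor $\kappa f$ out of the trace, and evaluating the resulting diagonal kernel termwise, I recognize each summand as $\sgn(\iu\kappa^2)\,g_{21}\sbrack{2m+1}(\kappa)$ via \eqref{g21-Series terms}. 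This yields
\[
\tfrac{d\ }{d\theta}\Big|_{\theta=0} A(\kappa;q+\theta f,r) = -\kappa\int_{\R} f\,g_{21}(\kappa)\dx,
\]
and, since $A$ is presented as a function of two independent arguments $q$ and $r$, comparison with \eqref{FunctDeriv} identifies $\delta A/\delta q=-\kappa g_{21}$. The mirror-image computation, varying $r\mapsto r+\theta f$ with $q$ fixed, swaps the roles of $\Lambda$ and $\Gamma$, reproduces \eqref{g12-Series terms}, and produces $\delta A/\delta r=-\kappa g_{12}$.

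The final identity \eqref{rho from A} is then immediate: substituting \eqref{gij from A} into its right-hand side gives $2\kappa(q\,g_{21}-r\,g_{12})$, which is precisely $\vr'(\kappa)$ by the pointwise Green's-function identity \eqref{rho-ID} already proved in Section~\ref{S:3}. The one genuinely delicate point in this plan is the justification for interchanging the infinite sum with the trace and with termwise differentiation in the computation of $\delta A/\delta q$; this is underwritten by the uniform Schatten-class bound obtained in the first step, so no new analytic estimates beyond those collected in Sections~\ref{S:2}--\ref{S:3} are required.
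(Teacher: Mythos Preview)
Your argument is correct and follows the same approach as the paper, which simply records termwise that $\tfrac{\delta}{\delta q}A_m = -\kappa\, g_{21}\sbrack{2m-1}$ and $\tfrac{\delta}{\delta r}A_m = -\kappa\, g_{12}\sbrack{2m-1}$ and then invokes \eqref{rho-ID} and \eqref{g12-gamma-Series}; your explicit use of cyclicity and the diagonal-kernel identification is exactly the ``simple calculation'' the paper alludes to. One cosmetic slip: the summand you obtain from $A_m$ involves $\Gamma(\Lambda\Gamma)^{m-1}$, hence $g_{21}\sbrack{2m-1}$ rather than $g_{21}\sbrack{2m+1}$, though of course the sum over $m\geq 1$ still yields $g_{21}$.
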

\begin{proof}
	By \eqref{Lambda}, we know that  $	A(\kappa; q,r)$  converges absolutely  for all $q\in \Bd\cap \Schwartz$.
	By simple calculations, we have
	\begin{equation}\label{rho from Am}
	\tfrac{\delta\,\,}{\delta q}A_m  =-\kappa g_{21}\sbrack{2m-1}
	\qtq{and}
	\tfrac{\delta\,\,}{\delta r}A_m =-\kappa g_{21}\sbrack{2m-1},
	\end{equation}
	which together with \eqref{rho-ID},  \eqref{g12-gamma-Series} implies \eqref{rho from A}.
\end{proof}
Next, we show that the mass, the Hamiltonian and the energy defined by  \eqref{HDNLS} and \eqref{Mass Energy} respectively, arise as the coefficients in the asymptotic expansion of $A(\kappa)$ as $|\kappa| \to\infty$. More precisely, we have.
\begin{lem}[Asymptotic expansion of $A(\kappa)$]\label{L:A asymptotics} For $q\in B_\delta\cap \Schwartz$, we have, as $|\kappa|\to\infty$,
	\begin{equation}\label{A asymptotics}
	A(\kappa)=(-\iu)\tfrac{M}{2}  - \tfrac{(-\iu)^2}{2i\kappa^2}\frac{H_{\DNLS}}{2}   +\frac{(-i)^3}{(2i\kappa^2)^2} \frac{E_{\DNLS}}{2}+ O(|\kappa|^{-6}). 
	\end{equation}
\end{lem}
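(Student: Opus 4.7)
The plan is to split the defining series \eqref{A-def'} as $A(\kappa)=A_1(\kappa)+A_2(\kappa)+A_3(\kappa)+\sum_{m\geq 4}A_m(\kappa)$, bound the tail uniformly as $O(|\kappa|^{-6})$, expand each of $A_1$, $A_2$, $A_3$ in descending powers of $\kappa^2$ by direct Fourier calculation, and then reassemble the coefficients to identify $M$, $H_{\DNLS}$, and $E_{\DNLS}$.

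For the tail I would combine the Hilbert--Schmidt bound $\|\Lambda\|_{\I_2}+\|\Gamma\|_{\I_2}\lesssim \|q\|_{L^2}$ from \eqref{Lambda} with the elementary operator-norm bound $\|\Lambda\|_{\op},\|\Gamma\|_{\op}\lesssim |\kappa|^{-1}\|q\|_{L^\infty}$, available since $\|(\p\pm i\kappa^2)^{-1/2}\|_{\op}\leq |\kappa|^{-1}$ and $q$ is Schwartz. By cyclicity of the trace and the ideal property $\|AB\|_{\I_1}\leq \|A\|_{\I_2}\|B\|_{\I_2}$, one obtains $|A_m(\kappa)|\leq \tfrac1m\|\Lambda\Gamma\|_{\I_1}\|\Lambda\Gamma\|_{\op}^{m-1}\lesssim \|q\|_{L^2}^2\|q\|_{L^\infty}^{2(m-1)}|\kappa|^{-2(m-1)}$, which sums over $m\geq 4$ to $O(|\kappa|^{-6})$.

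For the three leading traces, cyclicity lets me rewrite $A_m(\kappa)$ as a scalar multiple of $\tr\{[q(\p-i\kappa^2)^{-1}r(\p+i\kappa^2)^{-1}]^m\}$. Passing to Fourier variables, each inner momentum integral reduces to a rational function of the remaining frequencies and $\kappa^2$ via contour integration; for $i\kappa^2>0$ the poles of $(\mu\pm\kappa^2)^{-1}$ sit in opposite half-planes, so only one residue is picked up. For $A_1$ this yields the one-variable integral $-i\kappa^2\int \hat q(\xi)\hat r(-\xi)/(\xi+2\kappa^2)\,d\xi$; expanding $(\xi+2\kappa^2)^{-1}$ geometrically in $\xi/(2\kappa^2)$ and using $\int\xi^n\hat q(\xi)\hat r(-\xi)\,d\xi=(-i)^n\int q^{(n)}r\,dx$ produces the contributions $-\tfrac i2\int qr\,dx$, $\tfrac{1}{4\kappa^2}\int q'r\,dx$, $\tfrac{i}{8\kappa^4}\int q''r\,dx$ at orders $\kappa^0,\kappa^{-2},\kappa^{-4}$. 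Analogous residue computations applied to $A_2$ (a two-frequency integral) and $A_3$ (three-frequency) produce quartic contributions at orders $\kappa^{-2}$ and $\kappa^{-4}$ and a sextic contribution at order $\kappa^{-4}$, respectively.

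Finally, I collect powers of $\kappa^{-2}$. The $\kappa^0$ coefficient is immediately $-\tfrac i2 M=(-i)\tfrac{M}{2}$. The $\kappa^{-2}$ coefficient combines $A_1$'s bilinear piece (rewritten using $\int q'r\,dx=-\int qr'\,dx$) with $A_2$'s leading quartic piece into $\tfrac{1}{4i\kappa^2}\int(-iqr'+\tfrac12 q^2r^2)\,dx=-\tfrac{(-i)^2}{2i\kappa^2}\tfrac{H_{\DNLS}}{2}$; the analogous assembly at order $\kappa^{-4}$ of the three pieces coming from $A_1$, $A_2$, $A_3$ reconstructs $\tfrac{(-i)^3}{(2i\kappa^2)^2}\tfrac{E_{\DNLS}}{2}$. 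The case $i\kappa^2<0$ then follows from the $\sgn(i\kappa^2)$ prefactor combined with the symmetry \eqref{grho-symmetries}. The hard part will be the bookkeeping in this last step: the three densities constituting $E_{\DNLS}$ --- $q'r'$, $q^2rr'$, and $q^3r^3$ --- arise from three \emph{different} $A_m$'s with distinct combinatorial prefactors of the form $(2i\kappa^2)^{-n}$, and the required integrations by parts must be carried out in just the right order to make the identification with $E_{\DNLS}$ transparent.
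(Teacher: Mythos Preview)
Your approach is correct but differs from the paper's. The paper does not compute the traces $A_m(\kappa)$ directly; instead it exploits the ODE structure already established for the diagonal Green's function. From \eqref{g12-ID}, \eqref{g21-ID} and \eqref{rho from A} the authors derive closed implicit relations of the form
\[
2\iu\kappa^2\tfrac{\delta A}{\delta q}=\partial\tfrac{\delta A}{\delta q}+2\kappa^2 r\,\partial^{-1}\!\bigl(-q\tfrac{\delta A}{\delta q}+r\tfrac{\delta A}{\delta r}\bigr)+\kappa^2 r,
\]
iterate them to obtain asymptotic expansions of $-\kappa g_{21}$ and $-\kappa g_{12}$ in powers of $(2\iu\kappa^2)^{-1}$, and then recover $A(\kappa)$ via the homotopy formula $A(q,r)=\int_0^1\partial_\theta A(\theta q,\theta r)\,d\theta$. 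Your route---splitting $A=\sum_{m\leq3}A_m+O(|\kappa|^{-6})$ and evaluating each $A_m$ by residue calculus---is more elementary in that it needs nothing beyond the definition \eqref{A-def'} and basic Schatten estimates, and your tail bound using $\|\Lambda\|_{\op}\lesssim|\kappa|^{-1}\|q\|_{L^\infty}$ is clean. The price is the combinatorics you flag at the end: for $A_2$ and $A_3$ the multiple residue evaluations and subsequent integrations by parts to assemble $H_{\DNLS}$ and $E_{\DNLS}$ are genuinely laborious, whereas the paper's recursion produces the densities of $M$, $H_{\DNLS}$, $E_{\DNLS}$ directly as successive coefficients in the expansion of $-\kappa g_{21}$ and $-\kappa g_{12}$, and the $\theta$-integration merely supplies the correct homogeneity prefactors. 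The paper's method also yields, as a byproduct, the expansion \eqref{vr asymptotics} of $\gamma$ that is used later.
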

\begin{proof}
	By \eqref{g12-ID} and \eqref{g21-ID}, and  \eqref{rho from A}, we have
	\begin{equation*}
	\begin{split}
 2 \iu \kappa ^2 \tfrac{\delta A}{\delta q}
	= & \; 
 \partial \tfrac{\delta A}{\delta q}  + 2 \kappa^2 r \cdot \partial^{-1}\left(-q \tfrac{\delta A}{\delta q} + r \tfrac{\delta A}{\delta r}\right) + \kappa^2 r ,
	\\
 2 \iu \kappa ^2 \tfrac{\delta A}{\delta r}
= & -
\partial \tfrac{\delta A}{\delta r}  + 2 \kappa^2 q \cdot \partial^{-1}\left(-q \tfrac{\delta A}{\delta q} + r \tfrac{\delta A}{\delta r}\right) + \kappa^2 q.
	\end{split}
	\end{equation*}
	By the asymptotic analysis, we have
	\begin{equation}\label{biHam2}
	\begin{split}
	\tfrac{\delta A}{\delta q} &=  -\kappa g_{21} \\
		& =-\iu
	\frac{1}{2}r+\frac{1}{2\iu\kappa^2}\bigl(-\frac{ \iu}{2} r'+\frac{1}{2} qr^2 \bigr)+ \frac{1}{(2\iu \kappa^2)^2} \left(-\frac{\iu}{2}r'' + \frac{3}{2} qr r' + \frac{3}{4} \iu q^2r^3\right) + O(|\kappa|^{-6}),
	\\
	\tfrac{\delta A}{\delta r} &=  -\kappa g_{12} \\
& =-\iu
\frac{1}{2}q+\frac{1}{2\iu\kappa^2}\bigl(\frac{ \iu}{2} q'+\frac{1}{2} q^2r \bigr)+ \frac{1}{(2\iu \kappa^2)^2} \left(-\frac{\iu}{2}q'' - \frac{3}{2} qq'r  + \frac{3}{4} \iu q^3r^2\right) + O(|\kappa|^{-6}),
	\end{split}
	\end{equation}
which together with the fact that
	\begin{equation}\label{recovA}
	A(q,r) = \int_0^1 \partial_\theta A(\theta q,\theta r)\pd\theta
	\end{equation}
imply \eqref{A asymptotics}.
\end{proof}
\begin{rem}
	By computing $\vr= 2 \partial^{-1}\left( - q\tfrac{\delta A}{\delta q} + r\tfrac{\delta A}{\delta r}\right)$,
	we can obtain asymptotic expansion for $\vr$,
	\begin{align}
	\label{vr asymptotics}
	\vr   =
	\frac{1}{2\kappa^2}qr-\frac{1}{4\kappa^4}\bigl( \iu q'r-\iu qr'+\frac{3}{2}q^2r^2 \bigr)+O(|\kappa|^{-6}).
	\end{align}
\end{rem}
\begin{lem}[Density function of $A(\kappa)$]\label{L:micro A}
	For all $q\in \Bd\cap\Schwartz$ and $i \kappa^2\in \R$ with $|\kappa|\geq 1$, we have
	\begin{gather}
	A(\kappa) = - \bar A(-\bar{\kappa}),    \label{A-Symmetries} \\
	\tfrac{\partial\, }{\partial \kappa}A(\kappa) =
	\int_{\R}\Bigl[ 2i\kappa \cdot \vr(\kappa)- \bigl(qg_{21}(\kappa)+rg_{12}(\kappa)\bigr)\Bigr]\dx ,
	\label{rho-to-A} \\
	A (\kappa) = \int_{\R} \rho(\kappa)\dx, \qtq{where} \rho(\kappa)=-\kappa\frac{qg_{21}(\kappa)+rg_{12}(\kappa)}{2+\vr(\kappa)}.  \label{micro A}
	\end{gather}
\end{lem}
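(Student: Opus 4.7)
My strategy is to prove (2) directly from the series \eqref{A-def'}, then derive (3) from (2) by matching $\kappa$-derivatives and the value at $|\kappa|=\infty$, and finally deduce (1) from (3) via the reality condition and the symmetries already established.

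For (2), I would use cyclicity to rewrite $T_m:=\tr((\Lambda\Gamma)^m)=\tr(B^m)$ where $B:=\kappa qP_-\kappa rP_+$ and $P_\pm:=(\p\pm\iu\kappa^2)^{-1}$. Applying the product rule and $\partial_\kappa P_\pm=\mp2\iu\kappa P_\pm^2$ yields
\[
\partial_\kappa B=\tfrac{2}{\kappa}\,B+2\iu\kappa\bigl[\kappa qP_-^2\kappa rP_+-\kappa qP_-\kappa rP_+^2\bigr],
\]
so that $\partial_\kappa T_m=m\tr((\partial_\kappa B)B^{m-1})$. Cyclic rearrangement identifies the two derivative traces with $\tr(P_-Z^m)$ and $\tr(P_+B^m)$, where $Z:=P_-\kappa rP_+\kappa q$; these are precisely the quantities appearing in the series \eqref{rho-Series terms}, giving $\sgn(\iu\kappa^2)[\tr(P_+B^m)-\tr(P_-Z^m)]=\int_\R\vr\sbrack{2m}(\kappa)\,dx$. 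Dividing by $-m\,\sgn(\iu\kappa^2)$ and summing over $m$, I combine the result with the homogeneity identity $(2/\kappa)\sum m A_m=-\int(qg_{21}+rg_{12})\,dx$ (which follows from \eqref{gij from A} and $A_m$ being of degree $2m$ in $(q,r)$) to obtain \eqref{rho-to-A}.

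For (3), set $\widetilde A(\kappa):=\int_\R\rho(\kappa)\,dx$. I aim to show $\partial_\kappa\widetilde A=\partial_\kappa A$ and match asymptotically. Substituting $\kappa q(1+\vr)=-(g_{12}'+2\iu\kappa^2 g_{12})$ and $\kappa r(1+\vr)=g_{21}'-2\iu\kappa^2 g_{21}$ from \eqref{g12-ID}, \eqref{g21-ID} into $\kappa(qg_{21}+rg_{12})$ and invoking \eqref{QuadraticID} in the form $g_{12}g_{21}=-\vr(2+\vr)/4$ recasts the integrand as
\[
\rho=-\frac{g_{12}g_{21}'-g_{12}'g_{21}}{(1+\vr)(2+\vr)}-\frac{\iu\kappa^2\vr}{1+\vr}.
\]
Differentiating $\widetilde A$ in $\kappa$ and using the $\kappa$-derivatives of $g_{12},g_{21},\vr$ (obtained by differentiating \eqref{xID} in $\kappa$), the cancellations enabled by \eqref{QuadraticID} reduce $\partial_\kappa\widetilde A$ to $\int[2\iu\kappa\vr-(qg_{21}+rg_{12})]\,dx=\partial_\kappa A$. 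Hence $A-\widetilde A$ is constant in $\kappa$, and the constant vanishes by matching asymptotics: Lemma~\ref{L:A asymptotics} gives $A(\kappa)\to-\iu M/2$, while inserting the leading terms $g_{12}\sim-q/(2\iu\kappa)$, $g_{21}\sim-r/(2\iu\kappa)$ from \eqref{biHam2} and $\vr=O(|\kappa|^{-2})$ into $\rho$ also produces $\widetilde A(\kappa)\to-\iu M/2$.

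Finally, (1) is immediate from (3): conjugating the formula $A(-\bar\kappa)=\bar\kappa\int(qg_{21}(-\bar\kappa)+rg_{12}(-\bar\kappa))/(2+\vr(-\bar\kappa))\,dx$ and applying the symmetries \eqref{grho-symmetries} together with $r=-\bar q$ collapses $\bar A(-\bar\kappa)$ to $-A(\kappa)$. The main obstacle is the cancellation in (3): the shape of $\rho$ is not self-evidently the correct density, and only the algebraic miracle $\vr(2+\vr)=-4g_{12}g_{21}$ makes the factor $(2+\vr)$ in the denominator of $\rho$ \emph{exactly} the right weight to absorb the spectral derivative terms produced by $\partial_\kappa\widetilde A$, thereby reproducing the structure $2\iu\kappa\vr-(qg_{21}+rg_{12})$ of $\partial_\kappa A$.
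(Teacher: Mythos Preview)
Your proposal is correct and follows essentially the same route as the paper: compute $\partial_\kappa A$ term-by-term from the series (the paper records this as \eqref{rho to Am} with the same ingredients you use), show $\partial_\kappa\widetilde A=\partial_\kappa A$ by differentiating the ODE system \eqref{g12-ID}--\eqref{g21-ID} and \eqref{QuadraticID} in $\kappa$, then match asymptotics at $|\kappa|\to\infty$ via Lemma~\ref{L:A asymptotics}, and finally read off \eqref{A-Symmetries} from \eqref{micro A} and \eqref{grho-symmetries}. The only cosmetic difference is in the algebra for (3): you first eliminate $q,r$ from $\rho$ to obtain the representation $\rho=-\frac{g_{12}g_{21}'-g_{12}'g_{21}}{(1+\vr)(2+\vr)}-\frac{\iu\kappa^2\vr}{1+\vr}$ and then differentiate, whereas the paper goes the other way and directly exhibits the exact total $x$-derivative $\partial_x\bigl[\frac{g_{12}\partial_\kappa g_{21}-(\partial_\kappa g_{12})g_{21}}{2+\vr}\bigr]$ that accounts for the discrepancy between $\partial_\kappa\rho$ and $2\iu\kappa\vr-(qg_{21}+rg_{12})$; both computations rest on the same identities and your version just hides the total derivative inside the phrase ``the cancellations enabled by \eqref{QuadraticID}.''
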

\begin{proof}
	Firstly, we show that \eqref{rho-to-A}. In fact, simple calculations imply that
	\begin{equation}\label{rho to Am}
	\frac{\partial\ }{\partial \kappa}A_m
	=
	\int_{\R}\Bigl[ 2\iu\kappa\gamma\sbrack{2m} - \bigl(q g_{21}\sbrack{2m-1}+ r g_{12}\sbrack{2m-1} \bigr)\Bigr]\dx.
	\end{equation}
	By summation with respect to $m$, we can obtain \eqref{rho-to-A}.
	
	We now estimate \eqref{micro A}.
	On one hand, 
	by differentiating \eqref{g12-ID}, \eqref{g21-ID}, and \eqref{QuadraticID} with respect to $\kappa$,
	we obtain that,
	\begin{align*}
	\partial_x\Bigl(g_{12}\tfrac{\partial g_{21}}{\partial\kappa} - \tfrac{\partial g_{12}}{\partial\kappa} g_{21}\Bigr)
	=
	&
	\kappa(qg_{21}+rg_{12})\tfrac{\partial\ }{\partial\kappa}(\vr+1)
	-\kappa(\vr+1)\tfrac{\partial\ }{\partial\kappa}(qg_{21}+rg_{12})
	\\
	&
	-2\iu\kappa\vr(\vr+2)
	+(\vr+1)(qg_{21}+rg_{12}).
	\end{align*}
	Using \eqref{rho-ID}, we have
	\begin{align*}
	- \bigl( g_{12}\tfrac{\partial g_{21}}{\partial\kappa} - \tfrac{\partial g_{12}}{\partial\kappa} g_{21}\bigr) \vr'  =  -\vr(2+\vr)\tfrac{\partial\ }{\partial\kappa}\bigl(qg_{21}-rg_{12}\bigr)+ \bigl(qg_{21}-rg_{12}\bigr)(1+\vr)\tfrac{\partial\vr}{\partial\kappa}.
	\end{align*}
	Combining the above two identities, we get
	\begin{align*}
	\partial_x\frac{g_{12}\tfrac{\partial g_{21}}{\partial\kappa} - \tfrac{\partial g_{12}}{\partial\kappa} g_{21}}{2+\vr} &= \bigl(qg_{21}+rg_{12}-2\iu\kappa\vr\bigr) - \frac{\partial\ }{\partial\kappa}\bigl(\kappa\frac{qg_{21}+rg_{12}}{2+\vr}\bigr) ,
	\end{align*}
	which can be integrated in $x$ to yield
	\begin{align}\label{iden dAdk}
	\frac{\partial\ }{\partial\kappa} \int\kappa \frac{qg_{21}+rg_{12}}{2+\vr}\dx = \int qg_{21}+rg_{12}-2\iu\kappa\vr\dx = -\frac{\partial A}{\partial\kappa} .
	\end{align}
	
	 On the other hand, by \eqref{A asymptotics}, we have
	 \begin{equation}\label{A expand}
	 A(\kappa)=-i\tfrac{1}{2} M+O( |\kappa|^{-2} ), \qtq{as} \kappa\to\infty.
	 \end{equation} and by \eqref{biHam2} and \eqref{vr asymptotics}, we obtain
	\begin{equation}\label{vr expand}
	\rho(\kappa)=-\iu\frac{1}{2}qr + O(|\kappa|^{-2}), \qtq{as} \kappa\to\infty.
	\end{equation}
Combining \eqref{iden dAdk}, \eqref{A expand} and \eqref{vr expand}, we can obtain	\eqref{micro A}.
	
	Lastly,  \eqref{A-Symmetries} is obvious from \eqref{grho-symmetries} and \eqref{micro A}.
\end{proof}
Next, we show the commutation of $A(\kappa)$'s under the Poisson bracket \eqref{PoissonBracket},  which implies that $A(\vk)$ is an invariant quantity under the $A(\kappa)$'s flows.
\begin{lem}[Poisson brackets]\label{lem:PoissonBrackets}
	There exists $\delta>0$ such that for any $\kappa$ and $\vk$ with $\iu\kappa^2, \, \iu \vk^2 \in \R\setminus(-1,1)$ with $\kappa^2\neq\vk^2$ and any $q\in \BdS$ we have
	\begin{equation}\label{APoissonBracket}
	\{A(\kappa),A(\vk)\} = 0.
	\end{equation}
\end{lem}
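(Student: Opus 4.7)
The plan is to reduce $\{A(\kappa),A(\vk)\}$ to an integral of a total $x$-derivative using the structural identity \eqref{micro As commute}, and then invoke Schwartz decay to kill the boundary. The hypothesis $\kappa^2\neq\vk^2$ is what lets us cancel a nonzero prefactor at the end.

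First, I would plug in the functional derivatives computed in Lemma~\ref{L:A}, namely $\tfrac{\delta A(\kappa)}{\delta q}=-\kappa g_{21}(\kappa)$ and $\tfrac{\delta A(\kappa)}{\delta r}=-\kappa g_{12}(\kappa)$, into the Poisson bracket \eqref{PoissonBracket}. This yields
\begin{equation*}
\{A(\kappa),A(\vk)\}
= \kappa\vk\int_{\R}\bigl[g_{12}(\kappa)g_{21}'(\vk)+g_{21}(\kappa)g_{12}'(\vk)\bigr]\dx.
\end{equation*}
Since $q\in\BdS$, Propositions~\ref{prop:g} and~\ref{prop:rho} give that $g_{12}(\kappa),g_{21}(\kappa),\vr(\kappa)$ are all Schwartz, so integrating by parts (no boundary term) rewrites this as $-\kappa\vk\int [g_{12}'(\kappa)g_{21}(\vk)+g_{21}'(\kappa)g_{12}(\vk)]\dx$, which is precisely the left-hand side of \eqref{micro As commute} up to the factor $\tfrac{\kappa^2-\vk^2}{\kappa^2}$.

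Next, I would integrate the identity \eqref{micro As commute} over $\R$. The right-hand side is a sum of two total $x$-derivatives, $[g_{12}(\kappa)g_{21}(\vk)+g_{21}(\kappa)g_{12}(\vk)]'$ and $\tfrac{\vk}{2\kappa}[(\vr(\kappa)+1)(\vr(\vk)+1)]'$; Schwartz decay of $g_{12}(\kappa),g_{21}(\kappa),\vr(\kappa)$ (and their $\vk$-counterparts) makes both integrate to zero. Therefore
\begin{equation*}
\frac{\kappa^2-\vk^2}{\kappa^2}\int_{\R}\bigl[g_{12}'(\kappa)g_{21}(\vk)+g_{21}'(\kappa)g_{12}(\vk)\bigr]\dx = 0.
\end{equation*}
The hypothesis $\kappa^2\neq\vk^2$ lets me divide out the prefactor and conclude that the integral itself vanishes, whence $\{A(\kappa),A(\vk)\}=0$.

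I do not expect any serious obstacle: the conditions $\iu\kappa^2,\iu\vk^2\in\R\setminus(-1,1)$ (equivalently $|\kappa|,|\vk|\geq 1$) ensure that the Green's function apparatus of Section~\ref{S:3} applies to both spectral parameters simultaneously, the Schwartz assumption supplies the decay needed to discard boundary terms, and the algebraic identity \eqref{micro As commute} does essentially all the work. The only subtle point worth double-checking is that $(\vr(\kappa)+1)(\vr(\vk)+1)$, when differentiated, produces a genuine Schwartz function in $x$ (the constant $1$'s cancel after differentiation, and cross terms $\vr'(\kappa)$, $\vr(\vk)\vr'(\kappa)$, etc., are Schwartz by Proposition~\ref{prop:rho}), which guarantees the boundary terms at $\pm\infty$ really do vanish.
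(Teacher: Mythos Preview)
Your proposal is correct and follows essentially the same route as the paper: compute the Poisson bracket via the functional derivatives \eqref{gij from A}, then use the structural identity \eqref{micro As commute} to recognize the integrand (up to the nonzero factor $\tfrac{\kappa^2-\vk^2}{\kappa^2}$) as a total derivative that integrates to zero by Schwartz decay. The only difference is that the paper's proof is much terser and leaves the integration-by-parts step and the boundary-term justification implicit; your extra IBP step is harmless but not strictly needed, since one could equally apply \eqref{micro As commute} with $\kappa$ and $\vk$ swapped.
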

\begin{proof}
	By \eqref{rho from A}  and \eqref{PoissonBracket}, we have
	\begin{align*}
	\{A(\kappa),A(\vk)\} &=\kappa\vk \int g_{12}(\kappa)g_{21}'(\vk) + g_{21}(\kappa)g_{12}'(\vk)\dx,
	\end{align*}
	which together with \eqref{micro As commute} implies that \eqref{APoissonBracket} holds. .
\end{proof}

Next we will exhibit the dynamics of $g_{12}$,  $g_{21}$ and  $\gamma$ defined by \eqref{def gama}, \eqref{def g12} and \eqref{def g21}  along  the $A(\kappa)$ flow and DNLS  flow respectively.
\subsection{Dynamics  I:  the $A(\kappa)$ flow.}
Firstly, we have
\begin{lem}[Dynamics of $A(\kappa)$ flow]\label{L:A flow}
	Let $i\kappa^2\in\R$ with $|\kappa|>1$.  Under the $A(\kappa)$ flow, we have
	\begin{align}\label{qr under A}
	\frac{\pd\ }{\pd t}q = - \kappa g_{12}'(\kappa) \qtq{and} \frac{\pd\ }{\pd t}r =-  \kappa g_{21}'(\kappa).
	\end{align}
\end{lem}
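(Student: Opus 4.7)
The plan is to directly apply the Hamiltonian formalism introduced earlier in the paper, so the proof should be essentially a one-line computation. By \eqref{HFlow}, any Hamiltonian $H$ generates the flow
\[
\partial_t q = \partial_x \tfrac{\delta H}{\delta r}, \qquad \partial_t r = \partial_x \tfrac{\delta H}{\delta q},
\]
so taking $H = A(\kappa)$ reduces the lemma to identifying $\tfrac{\delta A}{\delta q}(\kappa)$ and $\tfrac{\delta A}{\delta r}(\kappa)$ and then differentiating in $x$.

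Those Fr\'echet derivatives have already been computed: Lemma~\ref{L:A}, equation~\eqref{gij from A}, gives
\[
\tfrac{\delta A}{\delta q}(\kappa) = -\kappa g_{21}(\kappa), \qquad \tfrac{\delta A}{\delta r}(\kappa) = -\kappa g_{12}(\kappa).
\]
Substituting into the Hamilton equations above and pulling the constant $-\kappa$ through $\partial_x$ yields
\[
\partial_t q = \partial_x\bigl(-\kappa g_{12}(\kappa)\bigr) = -\kappa g_{12}'(\kappa), \qquad \partial_t r = \partial_x\bigl(-\kappa g_{21}(\kappa)\bigr) = -\kappa g_{21}'(\kappa),
\]
which is precisely \eqref{qr under A}.

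The only thing that needs a brief justification is the legitimacy of applying the Poisson formalism to $A(\kappa)$: one must know that $A(\kappa)$ is a well-defined $C^1$ functional on $\BdS$ whose variational derivatives are given by the convergent series obtained by termwise differentiation of \eqref{A-def'}. This is exactly the content of Lemma~\ref{L:A} (the series \eqref{A-def'} converges absolutely for $q\in\BdS$, $|\kappa|\geq 1$, and the derivatives are \eqref{gij from A}), so no new work is needed here. Thus the lemma follows immediately, and there is no real obstacle: the whole statement is a bookkeeping consequence of the Hamiltonian structure \eqref{HFlow} together with the variational formulas already established in Lemma~\ref{L:A}.
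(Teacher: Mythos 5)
Your proof is correct and follows essentially the same route as the paper: plug the variational derivatives of $A(\kappa)$ from Lemma~\ref{L:A} into the Hamilton equations \eqref{HFlow}. The paper's one-line proof actually cites \eqref{rho from A} rather than \eqref{gij from A}, which appears to be a slip, since \eqref{gij from A} is the formula that is actually needed — your write-up invokes the right one.
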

\begin{proof}
	It is obvious from  \eqref{rho from A}  and \eqref{HFlow}.
	\end{proof}

\begin{lem}[Lax pair for the $A(\kappa)$ flow]\label{P:Lax A} 
	Let $\iu\kappa^2, \iu\vk^2\in\R\setminus(-1,1)$   with $\kappa^2\neq\vk^2$, and $L(\vk)$ be defined by \eqref{Intro KN L}. Under the $A(\kappa)$ flow, we have 
	\begin{align}\label{Lax rep A flow}
	\frac{\pd\ }{\pd t} L(\vk) = [ P_{A(\kappa)}, L(\vk)],
	\end{align}
	where
	\begin{equation}\label{A B}
P_{A(\kappa)}=
	\begin{bmatrix}-\frac 12 \Xi (\vr(\kappa) + 1) & - \Theta g_{12}(\kappa)\\-\Theta g_{21}(\kappa)&\frac 12 \Xi (\vr(\kappa) + 1)\end{bmatrix}	\text{~~with~~}
	\Theta=\tfrac{\vk \kappa^3}{\kappa^2-\vk^2}, ~~ \Xi=\tfrac{\vk^2\kappa^2}{\kappa^2-\vk^2}.
	\end{equation}
\end{lem}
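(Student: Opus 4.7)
The proposal is a direct computational verification. The left-hand side is easy: by Lemma \ref{L:A flow}, the $A(\kappa)$ flow acts as $\partial_t q = -\kappa g_{12}'(\kappa)$ and $\partial_t r = -\kappa g_{21}'(\kappa)$, and since $L(\vk)$ depends on $q,r$ only through the off-diagonal entries $-\vk q$, $-\vk r$, we immediately get
\begin{equation*}
\partial_t L(\vk) = \begin{bmatrix} 0 & \vk\kappa\, g_{12}'(\kappa) \\ \vk\kappa\, g_{21}'(\kappa) & 0 \end{bmatrix}.
\end{equation*}
So the plan reduces to computing $[P_{A(\kappa)},L(\vk)]$ entry by entry and matching.

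For the commutator, I would write $P_{A(\kappa)} = \begin{bmatrix} a & b \\ c & -a \end{bmatrix}$ with $a = -\tfrac12\Xi(\vr+1)$, $b = -\Theta g_{12}(\kappa)$, $c = -\Theta g_{21}(\kappa)$, and expand $P L - L P$ acting on a test vector, keeping careful track of the fact that $\partial$ does not commute with multiplication. The diagonal terms $\partial \pm i\vk^2$ inside $L(\vk)$ contribute derivatives of $a,b,c$ plus twisted commutators with $\pm i\vk^2$, while the off-diagonal $-\vk q$, $-\vk r$ produce purely multiplicative terms. The $(1,1)$ entry comes out to
\begin{equation*}
-a' + \vk(qc - rb) = \tfrac12\Xi\,\vr' - \Theta\vk(q g_{21} - r g_{12}),
\end{equation*}
and using \eqref{rho-ID} together with the algebraic identity $\vk\,\Xi\,\kappa = \vk\kappa^2\,\Theta$ (equivalently $\Xi\kappa = \Theta\vk$, which follows directly from the definitions of $\Theta$ and $\Xi$), this vanishes. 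The $(2,2)$ entry vanishes by the same mechanism (or by the symmetry $a\mapsto -a$, $b\leftrightarrow c$, $q\leftrightarrow r$).

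For the $(1,2)$ entry, the expansion yields
\begin{equation*}
-2\vk q\, a - 2i\vk^2 b - b' = \vk\Xi\, q(\vr+1) + 2i\vk^2\Theta\, g_{12}(\kappa) + \Theta\, g_{12}'(\kappa).
\end{equation*}
Now substitute \eqref{g12-ID} to replace $g_{12}'(\kappa)$ by $-2i\kappa^2 g_{12}(\kappa) - \kappa q(\vr+1)$ inside $\Theta\, g_{12}'$; the $g_{12}(\kappa)$-coefficient becomes $2i\Theta(\vk^2 - \kappa^2) = -2i\vk\kappa^3$ after using the definition of $\Theta$, and the $q(\vr+1)$-coefficient becomes $\vk\Xi - \kappa\Theta = -\vk\kappa^2$. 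Refactoring, this equals $\vk\kappa\bigl[-2i\kappa^2 g_{12}(\kappa) - \kappa q(\vr+1)\bigr] = \vk\kappa\, g_{12}'(\kappa)$, again by \eqref{g12-ID}. The $(2,1)$ entry is handled identically using \eqref{g21-ID}.

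There is no real obstacle beyond bookkeeping; the main point to highlight is that the precise choice of $\Theta$ and $\Xi$ in \eqref{A B} is dictated exactly by the two algebraic identities $\Xi\kappa = \Theta\vk$ and $\vk\Xi - \kappa\Theta = -\vk\kappa^2$, which together with \eqref{rho-ID}, \eqref{g12-ID}, \eqref{g21-ID} make the diagonal entries of the commutator telescope to zero and the off-diagonal entries collapse to $\vk\kappa\, g_{12}'(\kappa)$ and $\vk\kappa\, g_{21}'(\kappa)$. Identity \eqref{micro As commute} is essentially the same algebraic miracle written in a different form, and could alternatively be invoked to streamline the cancellation.
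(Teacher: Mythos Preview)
Your proposal is correct and follows essentially the same route as the paper: both compute $\partial_t L(\vk)$ from \eqref{qr under A} and verify the commutator entry by entry using the ODEs \eqref{rho-ID}, \eqref{g12-ID}, \eqref{g21-ID} for the diagonal Green's function. One small slip: the first displayed form of the algebraic identity, ``$\vk\,\Xi\,\kappa = \vk\kappa^2\,\Theta$'', is incorrect, but the parenthetical restatement $\Xi\kappa = \Theta\vk$ is the right one and is what you actually use, so the argument is unaffected.
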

\begin{proof}
Firstly, by \eqref{rho-ID}  and the fact that
\begin{align*} \vr'(\kappa) 
= &  (\partial+\iu\vk^2)\bigl( \vr(\kappa)+1 \bigr)-\bigl( \vr(\kappa)+1 \bigr)(\partial+\iu\vk^2) \\
= &  (\partial-\iu\vk^2)\bigl( \vr(\kappa)+1 \bigr)-\bigl( \vr(\kappa)+1 \bigr)(\partial-\iu\vk^2), 
	 \end{align*}
 we have
	\begin{align}
	\label{L11}
 (\partial+\iu\vk^2)\bigl( \vr(\kappa)+1 \bigr)-\bigl( \vr(\kappa)+1 \bigr)(\partial+\iu\vk^2)-2\kappa\bigl(qg_{21}(\kappa) - rg_{12}(\kappa)\bigr)=0.
	\\
	\label{L22}
(\partial-\iu\vk^2)\bigl( \vr(\kappa)+1 \bigr)-\bigl( \vr(\kappa)+1 \bigr)(\partial-\iu\vk^2)-2\kappa\bigl(qg_{21}(\kappa) - rg_{12}(\kappa)\bigr)=0.
	\end{align}

Next, by \eqref{g12-ID} and the fact that
	\begin{equation*}
	g_{12}'(\kappa)+2\iu\vk^2g_{12}(\kappa)
	=
	(\partial+\iu\vk^2)g_{12}(\kappa) -g_{12}(\kappa)(\partial-\iu\vk^2),
	\end{equation*}
 we get
	\begin{align}\label{L12}
	(\kappa^2-  \vk^2 )g_{12}'(\kappa)
	=
	\kappa^2\bigl[ (\partial+\iu\vk^2)g_{12}(\kappa) -g_{12}(\kappa) (\partial-\iu\vk^2)\bigr]+\kappa\vk^2 q\bigl(\vr(\kappa)+1\bigr).
	\end{align}
	
Finally, by   \eqref{g21-ID} and the fact that
	\begin{equation*}
	g_{21}'(\kappa)-2\iu\vk^2g_{21}(\kappa)
	=
	(\partial-\iu\vk^2)g_{21}(\kappa) -g_{21}(\kappa)(\partial+\iu\vk^2),
	\end{equation*}
we obtain
	\begin{align}\label{L21}
	(\kappa^2 - \vk^2 )g_{21}'(\kappa)
	=
	\kappa^2\bigl[ (\partial-\iu\vk^2)g_{21}(\kappa)  - g_{21}(\kappa)(\partial+\iu\vk^2) \bigr]-\kappa\vk^2 r\bigl(\vr(\kappa)+1\bigr).
	\end{align}
	
Combining \eqref{Intro KN L},  \eqref{qr under A} with
	\eqref{L11}, \eqref{L22}, \eqref{L12},  \eqref{L21},  we can obtain \eqref{Lax rep A flow}. 
\end{proof}

\begin{prop}[Microscopic conservation law for the $A(\kappa)$ flow]\label{Thm:A flow}
	Let $\iu\vk^2, \iu \kappa^2\in\R\setminus(-1,1)$ with $\kappa^2 \neq\vk^2$. Under the $A(\kappa)$ flow, we have  
	\begin{align}
	\frac{\pd\ }{\pd t} g_{12}(\vk) &=-\Xi g_{12}(\vk) \left[\vr(\kappa)+1\right] + \Theta g_{12}(\kappa)\left[\vr(\vk)+1\right] , \label{Ady g12}\\
	\frac{\pd\ }{\pd t} g_{21}(\vk) &= \Xi g_{21}(\vk) \left[\vr(\kappa)+1\right] - \Theta g_{21}(\kappa)\left[\vr(\vk)+1\right] , \label{Ady g21}\\
	\frac{\pd\ }{\pd t} \vr(\vk) &=-
	2 \Theta \left[ g_{12}(\kappa) g_{21}(\vk) -  g_{21}(\kappa)g_{12}(\vk) \right],\label{Ady gam}
	\end{align}
and the following microscopic conservation laws 
	\begin{align}
	\label{Ady gam claw}
	\partial_t\bigl\{ 2i\vk\cdot \vr(\vk) -\left(q g_{21}(\vk)+r g_{12}(\vk)\right)\bigr\}
	+ &  \partial_x j_{\vr}(\vk,\kappa)=0,\\
	\label{A flow micr cons law}
	\partial_t \rho(\vk) +  \partial_x j_{A(\kappa)}(\vk,\kappa)  =0, & 
	\end{align}
	where the flux functions $	j_{\vr}$ and $j_{A(\kappa)}$ are determined by
	\begin{align}\label{A:rho dot}
	j_{\vr}(\vk,\kappa):
	= &  -
	\tfrac{ \kappa^3(\kappa^2+\vk^2) }{ ( \kappa^2-\vk^2 )^2 }
	\bigl[ g_{12}(\kappa)g_{21}(\vk)+g_{21}(\kappa)g_{12}(\vk)\bigr] \notag \\& 
-
	\tfrac{\kappa^4\vk}{ ( \kappa^2-\vk^2 )^2 }
	\bigl[( \vr(\kappa)+1 )( \vr(\vk)+1 ) \bigr],
\\ \label{j sub A}
	j_{A(\kappa)}(\vk,\kappa):= & -\Theta \tfrac{ g_{12}(\kappa)g_{21}(\vk)+g_{12}(\vk)g_{21}(\kappa) }{2 +  \vr(\vk) }
	-\Theta \frac{\vk}{2\kappa}
	\vr(\kappa)
	.
	\end{align}
\end{prop}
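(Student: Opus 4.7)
My plan is to derive the three evolution equations \eqref{Ady g12}--\eqref{Ady gam} directly from the Lax pair of Lemma \ref{P:Lax A}, then use them (together with the Hamiltonian flow \eqref{qr under A} and the commutation identity \eqref{micro As commute}) to check the two microscopic conservation laws by direct computation.

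For Step 1, differentiating $L(\vk)R(\vk)=I$ and using \eqref{Lax rep A flow} gives $\partial_t R(\vk)=[P_{A(\kappa)},R(\vk)]$. Since $P_{A(\kappa)}$ is a matrix multiplication operator depending only on $x$, its integral kernel is $P_{A(\kappa)}(x)\delta(x-y)$, and so at the diagonal the commutator collapses to
\begin{equation*}
\partial_t G(x,x;\vk)=\bigl[P_{A(\kappa)}(x),\,G(x,x;\vk)\bigr].
\end{equation*}
Computing the $(1,2)$, $(2,1)$ and $(1,1)-(2,2)$ entries of this matrix commutator, and then multiplying by $\sgn(\iu\vk^2)$ to match the definitions \eqref{def gama}--\eqref{def g21}, yields exactly \eqref{Ady g12}, \eqref{Ady g21}, \eqref{Ady gam}.

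For \eqref{Ady gam claw}, I recognize $2\iu\vk\,\vr(\vk)-(qg_{21}(\vk)+rg_{12}(\vk))$ as the integrand on the right-hand side of \eqref{rho-to-A} --- that is, the density for $\partial_\vk A(\vk)$, which is a conserved integral by \eqref{APoissonBracket}. So the time derivative must be an exact $x$-derivative. Concretely I expand
\begin{equation*}
\partial_t\bigl[2\iu\vk\vr(\vk)-qg_{21}(\vk)-rg_{12}(\vk)\bigr]
\end{equation*}
substituting \eqref{qr under A} for $\partial_t q,\partial_t r$ and \eqref{Ady g12}--\eqref{Ady gam} for the $g_{12}(\vk)$, $g_{21}(\vk)$, $\vr(\vk)$ evolutions. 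The $\Xi$-terms carry $(\vr(\kappa)+1)$ and will be reassembled into a derivative via \eqref{rho-ID}, while the cross terms at parameters $\kappa\neq\vk$ must be collapsed using \eqref{micro As commute}. This is the microscopic version of the Poisson vanishing $\{A(\kappa),A(\vk)\}=0$, and it is precisely what converts the mixed quadratic expression into $-\partial_x j_\vr(\vk,\kappa)$ with $j_\vr$ as given in \eqref{A:rho dot}; the splitting of coefficients $\kappa^3(\kappa^2+\vk^2)/(\kappa^2-\vk^2)^2$ and $\kappa^4\vk/(\kappa^2-\vk^2)^2$ arises naturally from balancing the $\kappa^2/(\kappa^2-\vk^2)$ factor of \eqref{micro As commute} against $\Theta$ and $\Xi$.

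For \eqref{A flow micr cons law}, I use the quotient rule on $\rho(\vk)=-\vk(qg_{21}(\vk)+rg_{12}(\vk))/(2+\vr(\vk))$, applying once more the dynamics of Step 1 and \eqref{qr under A}. To turn the two resulting fractional pieces into a single divergence, I invoke the quadratic identity \eqref{QuadraticID} to replace $g_{12}(\vk)g_{21}(\vk)$ by $-\tfrac12\vr(\vk)(2+\vr(\vk))/2$ whenever a second power of $(2+\vr(\vk))$ appears in a denominator; this reduces the calculation to the same building blocks controlled by \eqref{micro As commute}. The answer matches \eqref{j sub A}, where the separate $-\Theta\frac{\vk}{2\kappa}\vr(\kappa)$ piece accounts for the $\partial_x$ hitting the coefficient when rewriting $[(\vr(\kappa)+1)(\vr(\vk)+1)]'/(2+\vr(\vk))$.

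The main obstacle is the algebraic collapse at the end of Step 3: after substitution the expression is a rational function in $g_{12}(\kappa),g_{21}(\kappa),g_{12}(\vk),g_{21}(\vk),\vr(\kappa),\vr(\vk)$ and their first $x$-derivatives, with coefficients built from $\Theta,\Xi,\kappa,\vk$, and it must telescope into $\partial_x j_{A(\kappa)}$. The only structural input that forces this cancellation is the two-parameter identity \eqref{micro As commute}; identifying how to apply it (once for the numerator contribution and once after clearing $(2+\vr(\vk))$ via \eqref{QuadraticID}) is the delicate part of the argument.
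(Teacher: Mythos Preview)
Your proposal is correct and follows essentially the same route as the paper: derive \eqref{Ady g12}--\eqref{Ady gam} from the diagonal restriction of $\partial_t R(\vk)=[P_{A(\kappa)},R(\vk)]$, then obtain both conservation laws by direct substitution of \eqref{qr under A} and \eqref{Ady g12}--\eqref{Ady gam}, collapsing the result with \eqref{rho-ID}, \eqref{QuadraticID}, and \eqref{micro As commute}. The paper organizes Step~3 by first multiplying through by $(2+\vr(\vk))^2$ rather than invoking \eqref{QuadraticID} to clear denominators as you describe, but this is the same algebra and \eqref{QuadraticID} is in fact used implicitly when rewriting $4g_{12}(\vk)g_{21}(\vk)=-\vr(\vk)(2+\vr(\vk))$.
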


\begin{rem}Due to the coercivity of the quadratic term of the flux, the conservation law \eqref{A flow micr cons law} is more useful than \eqref{Ady gam claw}.
	\end{rem}
\begin{proof}
	Firstly, we show the dynamics \eqref{Ady g12}, \eqref{Ady g21} and \eqref{Ady gam}. By  \eqref{HFlow}, Proposition \ref{P:R} and Lemma \ref{P:Lax A}, we have
	\begin{align*}
	\frac{\pd\ }{\pd t} G(x,z;\vk)
	&=
	- \int G(x,y;\vk)\frac{\pd\ }{\pd t} L(\vk)G(y,z;\vk)\pd{y} \\
	&= \int G(x,y;\vk)
	\begin{bmatrix}
	0 & -\kappa^2g_{12}'(\kappa) \\
	-\kappa^2g_{21}'(\kappa) & 0
	\end{bmatrix}G(y,z;\vk)\pd{y}
	\\
	&=
	P_{A(\kappa)}(x;\kappa,\vk)G(x,z;\vk)-G(x,z;\vk)P_{A(\kappa)}(z;\kappa,\vk).
	\end{align*}
By choosing $x=z$ and \eqref{Lax rep A flow}, we have
	\begin{align*}
	\frac{\pd\ }{\pd t} G(x,x;\vk)
	= & 
-	\Xi
	\begin{bmatrix}
	0
	&g_{12}(\vk)\bigl( \vr(\kappa)+1 \bigr)
	\\
	-g_{21}(\vk)\bigl( \vr(\kappa)+1 \bigr)
	& 0
	\end{bmatrix}
	\\ & 
	-
	\Theta
	\begin{bmatrix}
	g_{12}(\kappa)g_{21}(\vk)-g_{21}(\kappa)g_{12}(\vk)
	& -g_{12}(\kappa)\bigl( \vr(\vk)+1 \bigr)
	\\
	g_{21}(\kappa)\bigl( \vr(\vk)+1 \bigr)
	& -g_{12}(\kappa)g_{21}(\vk)+g_{21}(\kappa)g_{12}(\vk)
	\end{bmatrix},
	\end{align*}
	which implies  \eqref{Ady g12}, \eqref{Ady g21} and \eqref{Ady gam}.
	
	Next, we prove  \eqref{Ady gam claw}. A direct computation implies that
	\begin{align}
	\notag & 
	\partial_t\bigl\{2i\vk\vr(\vk)- q g_{21}(\vk)-r g_{12}(\vk)\bigr\}
\\
	=&
	\label{A:rho dot 1}
	- g_{21}(\vk)\frac{\pd\ }{\pd t}q
	- g_{12}(\vk)\frac{\pd\ }{\pd t}r
	+ 2\iu\vk\frac{\pd\ }{\pd t}\vr(\vk)
	- q \frac{\pd\ }{\pd t}g_{21}(\vk)
	- r \frac{\pd\ }{\pd t}g_{12}(\vk).
	\end{align}
	On the one hand, by Lemma \ref{L:A flow},  we have
	\begin{equation}\label{A:rho dot 11}
	- g_{21}(\vk)\frac{\pd\ }{\pd t}q
- g_{12}(\vk)\frac{\pd\ }{\pd t}r  
	=
	\kappa\left[ g_{21}(\vk)g_{12}'(\kappa) + g_{12}(\vk)g_{21}'(\kappa) \right].
	\end{equation}
	On the other hand, by \eqref{Ady g12},  \eqref{Ady g21},  and \eqref{Ady gam}, we obtain that
	\begin{align}
	\notag
	&  2\iu\vk\frac{\pd\ }{\pd t}\vr(\vk)
	- q \frac{\pd\ }{\pd t}g_{21}(\vk)
	- r \frac{\pd\ }{\pd t}g_{12}(\vk) 
	\\
	=
	\notag
	&
	-4 i\vk\Theta g_{12}(\kappa)g_{21}(\vk)
	-\Xi g_{21}(\vk) q\left[ \vr(\kappa)+1 \right]
	+\Theta g_{21}(\kappa) q\left[ \vr(\vk)+1 \right]
	\\
	\notag
	&+4 i\vk\Theta g_{12}(\kappa)g_{21}(\vk)
	+\Xi g_{12}(\vk) r\left[ \vr(\kappa)+1 \right]
	-\Theta g_{12}(\kappa) r\left[ \vr(\vk)+1 \right]
	\\
	=
	&
	\frac{\vk\Theta}{\kappa^2}\left[ g_{21}(\vk)g_{12}'(\kappa) + g_{12}(\vk)g_{21}'(\kappa) \right]
	-\frac{\Theta}{\vk}\left[ g_{21}(\kappa)g_{12}'(\vk) + g_{12}(\kappa)g_{21}'(\vk) \right].
	\label{A:rho dot 21}
	\end{align}
Combining \eqref{A:rho dot 1}, \eqref{A:rho dot 11},  \eqref{A:rho dot 21} with \eqref{micro As commute}, we can obtain
	\eqref{A:rho dot}. 
	
	Finally, let us show that \eqref{j sub A} holds. A direct calculation implies that
	\begin{align} 
	& \; \bigl( \vr(\vk)+2 \bigr)^2\partial_t \rho(\vk)
\notag	\\
	=
	&
	\label{j sub A 1}
	-\vk\bigl[
	g_{21}(\vk)\frac{\pd\ }{\pd t}q+ g_{12}(\vk)\frac{\pd\ }{\pd t}r
	\bigr]\bigl( \vr(\vk)+2 \bigr)
	\\
	\label{j sub A 2}
	&
	-
	\vk\bigl[
	q \frac{\pd\ }{\pd t}g_{21}(\vk)+r \frac{\pd\ }{\pd t}g_{12}(\vk)
	\bigr]\bigl( \vr(\vk)+2 \bigr)
	\\
	\label{j sub A 3}
	&
	+\vk\bigl[
	q g_{21}(\vk)+r g_{12}(\vk)
	\bigr]\partial_t \vr(\vk).
	\end{align}
	By Lemma \ref{L:A flow} and \eqref{Ady g12}, \eqref{Ady g21}, we have
	\begin{align}\label{j sub A 12}
	\notag
	&  \eqref{j sub A 1}+\eqref{j sub A 2}
	\\
	\notag
	= &
	\vk\kappa\left[ g_{21}(\vk)g_{12}'(\kappa)+g_{12}(\vk)g_{21}'(\kappa) \right]\bigl( \vr(\vk)+2 \bigr)
	\\
	\notag
	&+\frac{\vk\Theta}{2\kappa}\vr'(\kappa)\bigl( \vr(\vk)+1 \bigr)\bigl( \vr(\vk)+2 \bigr)
	-\frac{\Xi}{2}\vr'(\vk)\bigl( \vr(\kappa)+1 \bigr)\bigl( \vr(\vk)+2 \bigr)
	\\
	\notag
	= &
	\vk\kappa\left[ g_{21}(\vk)g_{12}'(\kappa)+g_{12}(\vk)g_{21}'(\kappa) \right]\bigl( \vr(\vk)+2 \bigr)
	\\
	&-\frac{\Xi}{2}\left[\bigl(\vr(\kappa)+1\bigr)\bigl( \vr(\vk)+1 \bigr)\right]'\bigl( \vr(\vk)+2 \bigr)
	+\Xi\vr'(\kappa)\bigl( \vr(\vk)+1 \bigr)\bigl( \vr(\vk)+2 \bigr). 
	\end{align}
	By  \eqref{Ady gam}, we have
	\begin{align}\label{j sub A 3 1}
	\notag
	\eqref{j sub A 3}
	= &
	-2\vk\Theta
	\left[qg_{21}(\vk)+rg_{12}(\vk)\right]
	\left[ g_{12}(\kappa)g_{21}(\vk)-g_{21}(\kappa)g_{12}(\vk) \right]
	\\
	\notag
	=&
	-2\vk\Theta
	\left[qg_{21}(\vk)-rg_{12}(\vk)\right]
	\left[ g_{12}(\kappa)g_{21}(\vk)+g_{21}(\kappa)g_{12}(\vk) \right]
	\\
	\notag
	&
	+4\Theta\vk g_{12}(\vk)g_{21}(\vk)\left[qg_{21}(\kappa)-rg_{12}(\kappa)\right]
	\\
	=&
	-\Theta
	\vr'(\vk)
	\left[ g_{12}(\kappa)g_{21}(\vk)+g_{21}(\kappa)g_{12}(\vk) \right]
	- \frac{\vk\Theta}{2\kappa}\vr'(\kappa) \vr(\vk)\left[ \vr(\vk)+2 \right].
	\end{align}
Combining \eqref{j sub A 12}, \eqref{j sub A 3 1} and  \eqref{micro As commute},  we can deduce
	\eqref{j sub A} . 
\end{proof}
\subsection{Dynamics  II: the DNLS  flow:.} Now we turn to the $H_{\DNLS}$ flow. We firstly recall the  Lax representation for \eqref{DNLS} as follows.
\begin{lem}[Lax Pair for the DNLS flow, \cite{AbCl:book,KaupN:DNLS}]\label{P:Lax H} Let $\iu\vk^2\in\R\setminus(-1,1)$, $L(\vk)$ be defined by \eqref{Intro KN L}. Under the DNLS  flow, we have
	\begin{align}\label{Lax repres}
	\frac{\pd\ }{\pd t} L(\vk) = [ P_{H_{\DNLS}}, L(\vk)],
	\end{align}
	where $
	P_{H_{\DNLS}}=
	\begin{bmatrix} -2i \vk^4-i\vk^2 q r & 2\vk^3q + i\vk q' + \vk q^2 r\\
	2\vk^3 r - i \vk r' + \vk qr^2 &2i \vk^4 + i \vk^2 q r \end{bmatrix}$.
\end{lem}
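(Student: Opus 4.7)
The plan is to verify the Lax identity \eqref{Lax repres} by direct computation of both sides as $2\times 2$ matrix differential operators and matching entries. Since $L_0(\vk)$ is time-independent, the left-hand side reduces to
\begin{equation*}
\pd_t L(\vk) = \begin{bmatrix} 0 & -\vk q_t \\ -\vk r_t & 0 \end{bmatrix},
\end{equation*}
so I first need the equations of motion for $q$ and $r$. From \eqref{HFlow} with $H = H_{\DNLS}$ as in \eqref{HDNLS} (equivalently, by expanding \eqref{DNLS} and its conjugate under $r=-\bar q$), the DNLS flow reads
\begin{equation*}
q_t = \iu q'' + 2qq' r + q^2 r', \qquad r_t = -\iu r'' + 2qrr' + q' r^2.
\end{equation*}

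For the right-hand side, write $P_{H_{\DNLS}} = \begin{bmatrix} a & b \\ c & -a \end{bmatrix}$ with $a = -2\iu\vk^4 - \iu\vk^2 qr$, $b = 2\vk^3 q + \iu\vk q' + \vk q^2 r$, $c = 2\vk^3 r - \iu\vk r' + \vk qr^2$. Splitting $L(\vk) = \pd \cdot I + \iu\vk^2 \sigma_3 + V$ with $V = \begin{bmatrix} 0 & -\vk q \\ -\vk r & 0 \end{bmatrix}$ and $\sigma_3 = \mathrm{diag}(1,-1)$, and using $[P_{H_{\DNLS}},\pd] = -P_{H_{\DNLS}}'$, the commutator decomposes as
\begin{equation*}
[P_{H_{\DNLS}}, L(\vk)] = -P_{H_{\DNLS}}' + \iu\vk^2 [P_{H_{\DNLS}}, \sigma_3] + [P_{H_{\DNLS}}, V].
\end{equation*}
Matching entries against $\pd_t L(\vk)$ produces three scalar identities:
\begin{equation*}
a' = \vk(cq - br), \quad b' + 2\iu\vk^2 b + 2\vk a q = \vk q_t, \quad c' - 2\iu\vk^2 c - 2\vk a r = \vk r_t,
\end{equation*}
together with the $(2,2)$ entry, which is automatically the negative of the $(1,1)$ entry.

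Each identity is then a polynomial calculation. The diagonal identity is immediate: $a' = -\iu\vk^2 (qr)'$, while $\vk(cq - br) = -\iu\vk^2(q'r + qr')$ after the $2\vk^4 qr$ and $\vk^2 q^2 r^2$ cross-terms cancel. For the $(1,2)$ identity, dividing by $\vk$ and collecting powers, the $\vk^4$, $\vk^2 q'$, and $\vk^2 q^2 r$ contributions of $b'/\vk$, $2\iu\vk b$, and $2aq$ cancel pairwise, leaving exactly $\iu q'' + 2qq' r + q^2 r' = q_t$. The $(2,1)$ identity follows by the same computation under the $q \leftrightarrow r$, $\iu \leftrightarrow -\iu$ symmetry built into the Kaup--Newell system.

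The main obstacle is purely algebraic bookkeeping across multiple polynomial cancellations in three different powers of $\vk$; there are no conceptual or analytic difficulties, which is why the result is cited from \cite{AbCl:book, KaupN:DNLS}. A structurally cleaner alternative would be to extract $P_{H_{\DNLS}}$ directly from the Laurent expansion of $P_{A(\kappa)}$ in Lemma~\ref{P:Lax A} as $\kappa\to\infty$, using the asymptotic identification $A(\kappa) \sim -\tfrac{\iu M}{2} - \tfrac{\iu H_{\DNLS}}{4\kappa^2} + O(\kappa^{-4})$ from \eqref{A asymptotics}; in that approach the $H_{\DNLS}$ flow appears as the $\kappa^{-2}$ coefficient of the one-parameter family of $A(\kappa)$ flows, and $P_{H_{\DNLS}}$ (up to normalization) is read off from the corresponding coefficient of $P_{A(\kappa)}$. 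However, that indirect route ultimately relies on the same polynomial algebra.
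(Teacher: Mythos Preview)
Your verification is correct. The paper does not actually prove this lemma: it is stated with attribution to \cite{AbCl:book,KaupN:DNLS} and no argument is given, since the Kaup--Newell Lax pair is classical. Your direct computation---splitting $L(\vk)=\partial\cdot I+\iu\vk^2\sigma_3+V$, reducing the commutator to three scalar identities, and checking the polynomial cancellations in powers of $\vk$---is the standard route and is carried out accurately (I checked the $(1,1)$ and $(1,2)$ entries; the $(2,1)$ entry indeed follows by the $(q,\iu)\leftrightarrow(r,-\iu)$ symmetry). Your closing remark about recovering $P_{H_{\DNLS}}$ from the large-$\kappa$ expansion of $P_{A(\kappa)}$ is also in the spirit of the paper's organization, though the paper does not pursue that derivation explicitly either.
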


Following the analogue argument as those in Proposition \ref{Thm:A flow}, we can obtain main result in this paper.
\begin{thrm}[Microscopic conservation law for the DNLS flow]\label{Thm:H flow}
	Let $\iu\vk^2\in\R\setminus(-1,1)$. Under the DNLS flow, we have
	\begin{align}\label{qr under DNLS}
	\frac{\pd\ }{\pd t}q = i q''+(q^2r)' \qtq{and} \frac{\pd\ }{\pd t}r =-  i r''+(qr^2)'.
	\end{align}
and 
	\begin{equation} 
	\begin{aligned}
	\frac{\pd\ }{\pd t} g_{12}(\vk) &=-2\left(2\iu \vk^4 + i\vk^2 q r \right) g_{12}(\vk) - \left(2\vk^3 q + i\vk q' + \vk q^2 r\right) \left(\vr(\vk) + 1\right) , \\
	\frac{\pd\ }{\pd t} g_{21}(\vk) &= 2\left(2\iu \vk^4 + i\vk^2 q r \right) g_{21}(\vk) + \left(2\vk^3 r - i\vk r' + \vk q r^2\right) \left(\vr(\vk) + 1\right) , \\
	\frac{\pd\ }{\pd t} \vr(\vk) &= 2\vk^2\vr'(\vk) + 2i\vk(q'g_{21}(\vk)+r'g_{12}(\vk)) + qr\vr'(\vk).
	\end{aligned}
	\end{equation}
Moreover, we have the following microscopic conservation law
\begin{equation}\label{MCL: H Flow}
\partial_t \rho(\vk) + \partial_x j_{\DNLS}(\vk)=0, 
\end{equation}
	where the density $\rho$ and the flux $j_{\DNLS}$  are defined by \eqref{micro A} and 
	\begin{align}\label{j sub H}
	j_{\DNLS}(\vk):
	= \, &
	i \vk \frac{q'g_{21}(\vk) - r' g_{12}(\vk)}{2+\vr(\vk)}
	- i\vk^2 q r
	- 2\vk^2 \rho(\vk)
	- qr\rho(\vk).
	\end{align}
\end{thrm}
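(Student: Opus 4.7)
The plan is to mirror, step by step, the argument for the $A(\kappa)$ flow in Proposition~\ref{Thm:A flow}, replacing the auxiliary operator $P_{A(\kappa)}$ by the operator $P_{H_{\DNLS}}$ of Lemma~\ref{P:Lax H}. First, the dynamics \eqref{qr under DNLS} follow from \eqref{HFlow} applied to the Hamiltonian $H_{\DNLS}$ in \eqref{HDNLS}: a direct computation of $\tfrac{\delta H_{\DNLS}}{\delta q}$ and $\tfrac{\delta H_{\DNLS}}{\delta r}$ via \eqref{FunctDeriv}, followed by one $\p_x$ derivative, gives precisely $\p_t q = \iu q'' + (q^2 r)'$ and $\p_t r = -\iu r'' + (qr^2)'$.

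Next, to obtain the diagonal dynamics of $g_{12}(\vk)$, $g_{21}(\vk)$ and $\vr(\vk)$, I would differentiate the resolvent $R(\vk) = L(\vk)^{-1}$ in time and invoke Lemma~\ref{P:Lax H}: using $\p_t R(\vk) = -R(\vk)[P_{H_{\DNLS}},L(\vk)]R(\vk) = [P_{H_{\DNLS}}, R(\vk)]$ at the level of integral kernels gives
\[
\p_t G(x,z;\vk) = P_{H_{\DNLS}}(x;\vk)\,G(x,z;\vk) - G(x,z;\vk)\,P_{H_{\DNLS}}(z;\vk),
\]
and then setting $z = x$ and reading off the entries via \eqref{def gama}--\eqref{def g21} yields the three claimed evolution identities. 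The only subtlety here is handling the diagonal of $P_{H_{\DNLS}}$, which contributes the $2\vk^2 \vr'(\vk)$ and $qr\,\vr'(\vk)$ terms in $\p_t \vr(\vk)$ via \eqref{rho-ID}.

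For the conservation law \eqref{MCL: H Flow}, I would compute, using the quotient rule,
\[
(\vr(\vk)+2)^2\,\p_t \rho(\vk) = -\vk(\p_t q)\,g_{21}(\vk)(\vr(\vk)+2) - \vk(\p_t r)\,g_{12}(\vk)(\vr(\vk)+2)
\]
\[
\qquad\qquad - \vk q\,\p_t g_{21}(\vk)(\vr(\vk)+2) - \vk r\,\p_t g_{12}(\vk)(\vr(\vk)+2) + \vk(qg_{21}(\vk)+rg_{12}(\vk))\,\p_t \vr(\vk),
\]
then substitute the evolution identities just derived. After cancellations among the terms quartic and quintic in $(q,r,g_{12},g_{21},\vr)$, the surviving expressions should be recognizable as an $x$-derivative. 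The key algebraic tool, exactly as in Proposition~\ref{Thm:A flow}, is the identity \eqref{micro As commute} (at $\kappa = \vk$), which rewrites $g_{12}(\vk)g_{21}'(\vk) + g_{21}(\vk)g_{12}'(\vk)$ as an exact $x$-derivative modulo a $[(\vr+1)^2]'$ term, together with \eqref{QuadraticID}. The remaining linear-in-$(q',r')$ and quadratic-in-$q,r$ pieces should organize themselves into the flux \eqref{j sub H}: the term $\iu\vk(q'g_{21}(\vk) - r'g_{12}(\vk))/(2+\vr(\vk))$ comes from pairing $\p_t q, \p_t r$ against $g_{12},g_{21}$, while the $-\iu\vk^2 qr - 2\vk^2\rho(\vk) - qr\rho(\vk)$ contributions come from the diagonal $P_{H_{\DNLS}}$-terms transported through the dynamics of $\vr(\vk)$.

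The main obstacle is the bookkeeping in this last step: several terms of the same polynomial weight appear with different denominators $(2+\vr(\vk))$ and $(2+\vr(\vk))^2$, and one must carefully invoke \eqref{rho-ID}, \eqref{g12-ID}, \eqref{g21-ID}, and \eqref{micro As commute} to collapse them into the single clean flux $j_{\DNLS}(\vk)$. The decay afforded by the Schwartz hypothesis on $q$, combined with Propositions~\ref{prop:g} and \ref{prop:rho}, justifies all the pointwise manipulations and integration by parts involved.
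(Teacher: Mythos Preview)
Your proposal is correct and follows essentially the same approach as the paper's own proof, which simply cites \eqref{HFlow}, Proposition~\ref{P:R}, Lemma~\ref{P:Lax H}, and \eqref{micro As commute} and says the computation is analogous to Proposition~\ref{Thm:A flow}. One minor remark: when you invoke \eqref{micro As commute} ``at $\kappa=\vk$'' the left-hand side vanishes and you recover exactly the derivative form of \eqref{QuadraticID}, so in practice the algebraic closure for the DNLS flux relies directly on \eqref{rho-ID}--\eqref{g21-ID} and \eqref{QuadraticID} rather than the two-parameter identity; this is consistent with what you wrote but worth making explicit in the final write-up.
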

\begin{proof}
	 By  \eqref{HFlow}, Proposition \ref{P:R} and Lemma \ref{P:Lax H},  it is easy to obtain the dynamics of $q$, $r$ and $g_{12}$, $g_{21}$ and $\gamma$ under the $H_{\DNLS}$ flow.  Combining the dynamics of $q$, $r$, $g_{12}$, $g_{21}$, $\gamma$ and \eqref{micro As commute}, we can complete the proof of the microscopic conservation law \eqref{MCL: H Flow}.  
	\end{proof}

\bibliographystyle{plain}

\end{document}